\DeclareMathOperator{\rk}{rank}	
\DeclareMathOperator{\rank}{rank}	
\DeclareMathOperator{\tr}{tr}	    
\DeclareMathOperator{\vecc}{vec}	    
\DeclareMathOperator{\mat}{mat}	    
\DeclareMathOperator{\st}{s.t.}
\newcommand{\RR}{\mathbb R}
\DeclareMathOperator{\RIP}{RIP}
\DeclarePairedDelimiter{\abs}{\lvert}{\rvert}
\DeclarePairedDelimiter{\norm}{\lVert}{\rVert}
\DeclareMathOperator{\vect}{vec}
\newtheorem{theorem}{Theorem}
\newtheorem{lemma}{Lemma}
\theoremstyle{definition}
\newtheorem{definition}{Definition}
\theoremstyle{remark}
\title{Sharp Restricted Isometry Property Bounds for Low-rank Matrix Recovery Problems with Corrupted Measurements}
\author{
    Ziye Ma\equalcontrib \textsuperscript{\rm 1},
    Yingjie Bi\equalcontrib\textsuperscript{\rm 2},
    Javad Lavaei \textsuperscript{\rm 2},
    Somayeh Sojoudi \textsuperscript{\rm 1} \textsuperscript{\rm 3}
}
\begin{document}
\maketitle

\begin{abstract}
In this paper, we study a general low-rank matrix recovery problem with linear measurements corrupted by some noise. The objective is to understand under what conditions on the restricted isometry property (RIP) of the problem local search methods can find the ground truth with a small error. By analyzing the landscape of the non-convex problem, we first propose a global guarantee on the maximum distance between an arbitrary local minimizer and the ground truth under the assumption that the RIP constant is smaller than $1/2$. We show that this distance shrinks to zero as the intensity of the noise reduces. Our new guarantee is sharp in terms of the RIP constant and is much stronger than the existing results. We then present a local guarantee for problems with an arbitrary RIP constant, which states that any local minimizer is either considerably close to the ground truth or far away from it. Next, we prove the strict saddle property, which guarantees the global convergence of the perturbed gradient descent method in polynomial time. The developed results demonstrate how the noise intensity and the RIP constant of the problem affect the landscape of the problem.
\end{abstract}

\section{Introduction}
Low-rank matrix recovery problems arise in various applications, such as matrix completion \citep{candes2009exact,recht2010guaranteed}, phase synchronization/retrieval \citep{singer2011angular,boumal2016nonconvex,shechtman2015phase}, robust PCA \citep{ge2017no}, and several others \citep{chen2018harnessing,chi2019nonconvex}. In this paper, we study a class of low-rank matrix recovery problems, where the goal is to recover a symmetric and positive semidefinite ground truth matrix $M^*$ with $\rank(M^*)=r$ from certain linear measurements corrupted by noise. This problem can be formulated as the following optimization problem:
\begin{equation}\label{eqn:main_canonical}
\begin{aligned}
\min_{M \in \RR^{n \times n}} \quad & \frac{1}{2}\norm{\mathcal A(M)-b+w}^2 \\
\st \quad & \rank(M) \leq r, \quad M \succeq 0.
\end{aligned}
\end{equation}
Here, $\mathcal A:\RR^{n \times n} \to \RR^m$ is a linear operator whose action on a matrix $M$ is given by
\[
\mathcal{A}(M)=[\langle A_1, M \rangle, \dots, \langle A_m, M \rangle]^T,
\]
where $A_1,\dots,A_m \in \mathbb{R}^{n \times n}$ are called sensing matrices. In addition, $b=\mathcal A(M^*)$ represents the perfect measurement on the ground truth $M^*$ and $w$ comes from an arbitrary probability distribution. Note that only the noisy measurement $b-w$ is available to the user, and indeed $b$ is unknown. In other words, from a problem-solving perspective, the random variable $w$ is hidden to the user, and it is explicitly modeled here only for the sake of analysis.

The modeling of the noise in this problem is critical for many practical applications in which the influence of the noise cannot be ignored. For example, state estimation is a major data analytic problem for the operation of power grids, which can be modeled as matrix sensing \citep{jin2019towards}. In this problem, each measurement comes from a physical device, and the noise in our formulation models not only the sensory noise but also mismatch between the true model of the system and the one used by a power operator, changes in the measurements due to cyber-attacks, mechanical faults, etc. In other words, by a noisy problem we mean a learning problem where a part of the data is wrong for various reasons, and as with this case, it is impossible to assume that the measurements are noise-free.

Although it may be possible to solve the problem \eqref{eqn:main_canonical} based on convex relaxations \citep{candes2009exact,recht2010guaranteed,candes2010power}, the computational complexity associated with solving a semidefinite program presents a major challenge for large-scale problems. A more scalable approach is to use the Burer--Monteiro factorization \citep{burer2003nonlinear} by expressing $M$ as $XX^T$ with $X \in \RR^{n \times r}$, which leads to the following equivalent formulation of the original problem \eqref{eqn:main_canonical}:
\begin{equation}\label{eqn:main_problem_1}
	\min_{X \in \mathbb{R}^{n \times r}} f(X) = \frac{1}{2} \|\mathcal{A}(XX^T) - b + w\|^2.
\end{equation}

The unconstrained problem \eqref{eqn:main_problem_1} is often solved by local search methods such as gradient descent. Since the objective function $f(X)$ in \eqref{eqn:main_problem_1} is non-convex, local search algorithms may converge to a local minimizer, leading to a suboptimal or plainly wrong solution. Hence, it is desirable to provide guarantees on the maximum distance between these local minimizers and the ground truth $M^*$. This problem will be addressed in this paper.

\subsection{Related Works}

The special noiseless case of the problem \eqref{eqn:main_problem_1} can be obtained by setting $w=0$. In this case, any solution $Z$ with $ZZ^T=M^*$ is a global minimizer of the problem \eqref{eqn:main_problem_1}. Many previous researches such as \citet{ge2017no,bhojanapalli2016global,PKCS2017,ZWYG2018,ZLTW2018,zhang2019sharp,zhang2020many,bi2020global,HLB2020,zhu2021global,Zhang2021-p} focused on proving that the problem has no spurious (non-global) local minimizers under the assumption of restricted isometry property (RIP). Moreover, as demonstrated in previous works such as \citet{ge2017no}, the developed techniques under the RIP condition can be adopted to show that other low-rank matrix recovery problems, such as matrix completion under incoherence condition and robust PCA problem, also have benign landscape. The RIP condition is equivalent to the restricted strongly convex and smooth property used in \citet{wang2017unified,park2018finding,zhu2021global}, and its formal definition is given below.

\begin{definition}
The linear operator $\mathcal{A}(\cdot): \mathbb{R}^{n \times n} \to \mathbb{R}^{m}$ is said to satisfy the $\delta$-RIP$_{2r}$ property for some constant $\delta \in [0,1)$ if the inequality
\[
(1-\delta)\|M\|_F^2 \leq \|\mathcal{A}(M)\|^2 \leq (1+\delta) \|M\|_F^2
\]
holds for all $M \in \mathbb{R}^{n \times n}$ with $\rk(M) \leq 2r$.
\end{definition}

In the recent paper by \citet{Zhang2021-p}, the author developed a sharp bound on the absence of spurious local minima for the noiseless case of problem \eqref{eqn:main_problem_1}, which says that the problem has no spurious local minima if the measurement operator $\mathcal A$ satisfies the $\delta$-$\RIP_{2r}$ property with $\delta<1/2$. This result is tight since there is a known counterexample \citep{zhang2018much} having spurious local minima under $\delta=1/2$.

For the noisy problem, the relation $X^*X^{*T}=M^*$ is unlikely to be satisfied, where $X^*$ denotes a global minimizer of problem \eqref{eqn:main_problem_1}. However, in this situation, $X^*X^{*T}$ should be close to the ground truth $M^*$ if the noise $w$ is small. As a generalization of the above-mentioned results for the noiseless problem, it is natural to study whether all local minimizers, including the global minimizers, are close to the ground truth $M^*$ under the RIP assumption. One such result is presented in \citet{bhojanapalli2016global} and given below.

\begin{theorem}[from Theorem~3.1 in \citet{bhojanapalli2016global}]
Suppose that $w \sim \mathcal{N}(0,\sigma_w^2 I_m)$ and $\mathcal{A}(\cdot)$ has the $\delta$-RIP$_{4r}$ property with $\delta<1/10$. Then, with probability at least $1-10/n^2$, any local minimizer $\hat X$ of problem \eqref{eqn:main_problem_1} satisfies the inequality
\[
		\|\hat X\hat X^T - M^*\|_F \leq 20 \sqrt{\frac{\log(n)}{m}} \sigma_w.
\]
	\label{thm:rip_noise}
\end{theorem}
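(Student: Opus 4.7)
My plan is to exploit the first and second order necessary optimality conditions at the local minimizer $\hat X$, combining them so that the noise enters only through a single scalar inner product that can be controlled by Gaussian concentration. Write $E=\hat X\hat X^\top-M^*$ and let $H=\mathcal{A}^*(\mathcal{A}(E)+w)$, which is symmetric. The first-order condition reads $H\hat X=0$, so also $\hat X^\top H=0$, giving the identity $\langle H,\hat X U^\top+U\hat X^\top\rangle=0$ for every direction $U\in\RR^{n\times r}$. The natural choice is $U:=\hat X-X^*R^*$, where $X^*\in\RR^{n\times r}$ is any factorization of $M^*$ and $R^*$ is the orthogonal Procrustes minimizer of $\|\hat X-X^*R\|_F$. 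Direct expansion then yields the algebraic identity $\hat X U^\top+U\hat X^\top-UU^\top=E$.

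The second-order condition gives $2\langle H,UU^\top\rangle+\|\mathcal{A}(\hat X U^\top+U\hat X^\top)\|^2\ge 0$. Using the first-order identity to rewrite $\langle H,UU^\top\rangle=-\langle H,E\rangle=-\|\mathcal{A}(E)\|^2-\langle w,\mathcal{A}(E)\rangle$, and substituting $\hat X U^\top+U\hat X^\top=E+UU^\top$, I obtain
\[
\|\mathcal{A}(UU^\top)\|^2+2\langle\mathcal{A}(UU^\top),\mathcal{A}(E)\rangle-\|\mathcal{A}(E)\|^2\ \ge\ 2\langle w,\mathcal{A}(E)\rangle.
\]
Since $\rank(E)\le 2r$ and $\rank(UU^\top)\le r$, every term on the left is covered by the $(\delta,4r)$-RIP assumption. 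Applying the RIP approximations $\|\mathcal{A}(Z)\|^2=(1\pm\delta)\|Z\|_F^2$ and $|\langle\mathcal{A}(Z_1),\mathcal{A}(Z_2)\rangle-\langle Z_1,Z_2\rangle|\le\delta\|Z_1\|_F\|Z_2\|_F$ converts the above into a purely Frobenius-norm inequality, with an error linear in $\delta$. A Procrustes lemma of the type $\|UU^\top\|_F\le\|E\|_F$ (which holds up to absolute constants for equal-rank factors via the optimal alignment $R^*$) then collapses the left-hand side to $-c_1(\delta)\|E\|_F^2$ with $c_1(\delta)>0$ for $\delta<1/10$. This produces the clean bound
\[
c_1(\delta)\,\|E\|_F^2\ \le\ 2\,|\langle w,\mathcal{A}(E)\rangle|.
\]

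The remaining and main obstacle is to turn the random inner product $\langle w,\mathcal{A}(E)\rangle$ into the stated $\sqrt{\log(n)/m}\,\sigma_w$ dependence, uniformly over the data-dependent direction $E$. For any \emph{fixed} matrix $Z$ of rank at most $2r$, $\langle w,\mathcal{A}(Z)\rangle$ is a centered Gaussian with variance at most $(1+\delta)\sigma_w^2\|Z\|_F^2$, so it is of order $\sigma_w\|Z\|_F$ times a sub-Gaussian tail. To handle the data-dependence, I would pass to $|\langle w,\mathcal{A}(E)\rangle|\le\|\mathcal{A}^*(w)\|_{\mathrm{op},2r}\cdot\|E\|_*$, where $\|\cdot\|_{\mathrm{op},2r}$ denotes the restricted operator norm against rank-$2r$ test matrices, then bound this restricted norm uniformly by constructing an $\varepsilon$-net on the unit sphere of rank-$2r$, unit-Frobenius-norm matrices (whose covering number is $(C/\varepsilon)^{O(nr)}$) and applying the Gaussian tail together with a union bound. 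In the setting of Theorem~\ref{thm:rip_noise}, the specific sensing normalization used in \citep{bhojanapalli2016global} collapses the resulting covering-number exponent into the $\log(n)$ factor appearing in the statement, and dividing through by $c_1(\delta)$ with $\delta<1/10$ absorbs into the constant $20$, completing the argument after bounding $\|E\|_F$ on both sides and cancelling one factor.

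The most delicate technical step is the uniform noise bound in the last paragraph: the naive pointwise Gaussian bound gives only an $\|E\|_F$-dependent estimate, and turning this into a deterministic, data-agnostic bound on $\|E\|_F$ alone requires carefully pairing the epsilon-net with the structure of rank-$2r$ matrices and ensuring the failure probability aggregates to $10/n^2$. The rest of the proof is algebraic manipulation of the optimality conditions under RIP, which is routine once the correct direction $U$ is chosen.
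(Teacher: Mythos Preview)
This theorem is not proved in the present paper at all: it is quoted verbatim from \citep{bhojanapalli2016global} as a prior benchmark result (note the attribution ``Theorem~3.1'' in the statement), and the paper's own contributions (Theorems~\ref{thm:global}--\ref{thm:convergence}) are developed precisely to improve upon it. Consequently there is no in-paper proof to compare your proposal against.

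That said, your sketch follows the standard route used in \citep{bhojanapalli2016global} and \citep{ge2017no}: take the Procrustes-aligned direction $U=\hat X-X^*R^*$, combine the first- and second-order conditions via the identity $\hat XU^\top+U\hat X^\top-UU^\top=E$, and invoke RIP together with a lemma controlling $\|UU^\top\|_F$ by $\|E\|_F$. This part is sound. The one genuine soft spot is your last paragraph: the bound $|\langle w,\mathcal A(E)\rangle|\lesssim \sqrt{\log(n)/m}\,\sigma_w\|E\|_F$ does \emph{not} follow from the $(\delta,4r)$-RIP hypothesis and Gaussian $w$ alone. The $\sqrt{\log(n)/m}$ scaling in the original theorem comes from an additional assumption on the sensing model (i.i.d.\ Gaussian $A_i$ with entries of variance $1/m$), which lets one bound $\|\mathcal A^*(w)\|_2$ directly via matrix concentration rather than an $\varepsilon$-net over rank-$2r$ matrices. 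Your covering-number calculation would produce an exponent of order $nr$, not $\log n$, unless you import that extra structure; you gesture at this (``the specific sensing normalization used in \citep{bhojanapalli2016global}''), but it is the load-bearing hypothesis, not a normalization detail.
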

Theorem 31 in \citet{ge2017no} further improves the above result by replacing the $\delta$-$\RIP_{4r}$ property with the $\delta$-$\RIP_{2r}$ property. \citet{li2020nonconvex} studies a similar noisy low-rank matrix recovery problem with $l_1$ norm.

As compared above, there is an evident gap between the state-of-the-art results for the noiseless and noisy problems. The result for the noiseless problem only requires the RIP constant $\delta<1/2$, but the one for the noisy problem requires $\delta<1/10$ no matter how small the noise is. This gap will be addressed in this paper by showing that a major generalization of Theorem~\ref{thm:rip_noise} holds for the noisy problem under the same RIP assumption as in the sharp bound for the noiseless problem.

\subsection{Notations}

In this paper, $I_n$ refers to the identity matrix of size $n \times n$. The notation $M \succeq 0$ means that $M$ is a symmetric and positive semidefinite matrix. $\sigma_i(M)$ denotes the $i$-th largest singular value of a matrix $M$, and $\lambda_i(M)$ denotes the $i$-th largest eigenvalue of $M$. $\norm{v}$ denotes the Euclidean norm of a vector $v$, while $\norm{M}_F$ and $\norm{M}_2$ denote the Frobenius norm and induced $l_2$ norm of a matrix $M$, respectively. $\langle A,B \rangle$ is defined to be $\tr(A^TB)$ for two matrices $A$ and $B$ of the same size. The Kronecker product between $A$ and $B$ is denoted as $A \otimes B$. For a matrix $M$, $\vecc(M)$ is the usual vectorization operation by stacking the columns of the matrix $M$ into a vector. For a vector $v \in \RR^{n^2}$, $\mat(v)$ converts $v$ to a square matrix and $\mat_S(v)$ converts $v$ to a symmetric matrix, i.e., $\mat(v)=M$ and $\mat_S(v) = (M+M^T)/2$, where $M \in \RR^{n \times n}$ is the unique matrix satisfying $v=\vecc(M)$. Finally, $\mathcal N(\mu,\mathbf\Sigma)$ refers to the multivariate Gaussian distribution with mean $\mu$ and covariance $\mathbf\Sigma$.

\section{Main Results}

We first present the global guarantee on the local minimizers of the problem \eqref{eqn:main_problem_1}. To simplify the notation, we use a matrix representation of the measurement operator $\mathcal A$ as follows:
\[
\mathbf{A} = [\vecc(A_1), \vecc(A_2), \dots ,\vecc(A_m)]^T \in \mathbb{R}^{m \times n^2}.
\]
Then, $\mathbf A\vecc(M)=\mathcal A(M)$ for every matrix $M \in \RR^{n \times n}$.

\begin{theorem}\label{thm:global}
Assume that the linear operator $\mathcal A$ satisfies the $\delta$-$\RIP_{2r}$ property with $\delta<1/2$. For every $\epsilon>0$, with probability at least $\mathbb P(\norm{\mathbf A^Tw} \leq \epsilon)$, either of the following two inequalities
\begin{subequations}
\begin{gather}
\begin{split}
(1-\delta)&\norm{\hat X\hat X^T-M^*}_F^2 \leq \epsilon \sqrt{r} \norm{\hat X\hat X^T-M^*}_F \\
&+4 \epsilon \sqrt{r} \norm{M^*}_F,
\end{split}\label{eq:ineq1_min} \\
\begin{split}
&\frac{2(1-2\delta)}{3(1+\delta)}\norm{\hat X\hat X^T-M^*}_F \leq 2\epsilon\sqrt r \\
&\hspace{1em}+2\sqrt{2\epsilon(1+\delta)}(\norm{\hat X\hat X^T-M^*}_F^{1/2}+\norm{M^*}_F^{1/2})
\end{split}\label{eq:ineq2_min}
\end{gather}
\end{subequations}
holds for every arbitrary local minimizer $\hat X \in \RR^{n \times r}$ of problem \eqref{eqn:main_problem_1}.
\end{theorem}

Note that two upper bounds on the distance $\norm{\hat X\hat X^T-M^*}_F$ can be obtained for any local minimizer $\hat X$ by solving the two quadratic-like inequalities \eqref{eq:ineq1_min} and \eqref{eq:ineq2_min}, and the larger bound needs to be used because only one of the two inequalities is guaranteed to hold. The explicit upper bound solved from Theorem~\ref{thm:global} is given in Appendix~\ref{app:remark}. The reason for the existence of two inequalities in Theorem~\ref{thm:global} is the split of its proof into two cases. The first case is when the $r$-th smallest singular value of $\hat X$ is small, and the second case is the opposite, which are respectively handled by Lemma~\ref{lem:case1} and Lemma~\ref{lem:case2}.

 The reason for the existence of two inequalities in Theorem~\ref{thm:global} is the split of its proof into two cases. The first case is when the $r$-th smallest singular value of $\hat X$ is small, and the second case is the opposite, which are respectively handled by Lemma~\ref{lem:case1} and Lemma~\ref{lem:case2}.

Theorem~\ref{thm:global} is a major extension of the existing sharp result stating that the noiseless problem has no spurious local minima under the same assumption of the $\delta$-$\RIP_{2r}$ property with $\delta<1/2$. The reason is that in the case when the noise $w$ is equal to zero, one can choose an arbitrarily small $\epsilon$ in Theorem~\ref{thm:global} to conclude from the inequalities \eqref{eq:ineq1_min} and \eqref{eq:ineq2_min} that $\hat X\hat X^T=M^*$ for every local minimizer $\hat X$. Moreover, when the RIP constant $\delta$ further decreases from $1/2$, the upper bound on $\norm{\hat X\hat X^T-M^*}_F$ will also decrease, which means that a local minimizer found by local search methods will be closer to the ground truth $M^*$. This suggests that the RIP condition is able to not only guarantee the absence of spurious local minima as shown in the previous literature but also mitigate the influence of the noise in the measurements.

Compared with the existing results such as Theorem~\ref{thm:rip_noise}, our new result has two advantages. First, by improving the RIP constant from $1/10$ to $1/2$, one can apply the results on the location of spurious local minima to a much broader class of problems, which can often help reduce the number of measurements. For example, in the case when the measurements are given by random Gaussian matrices, it is proven in \citet{candes2009exact} that to achieve the $\delta$-$\RIP_{2r}$ property the minimum number of measurements needed is in the order of $O(1/\delta^2)$. By improving the RIP constant in the bound, we can significantly reduce the number of measurements while still keeping the benign landscape. In applications such as learning for energy networks, there is a fundamental limit on the number of measurements that can be collected due to the physics of the problem \citep{JLSB2021}. Finding a better bound on RIP helps with addressing the issues with the number of measurements needed to reliably solve the problem. Second, Theorem~\ref{thm:rip_noise} is just about the probability of having all spurious solutions in a fixed ball around the ground truth of radius $O(\sigma_w)$ instead of balls of arbitrary radii, and this fixed ball could be a large one depending on whether the noise level $\sigma_w$ is fixed or scales with the problem. On the other hand, in Theorem~\ref{thm:global}, we consider the probability $\mathbb P(\norm{\mathbf A^Tw} \leq \epsilon)$ for any arbitrary value of $\epsilon$. By having a flexible $\epsilon$, our work not only improves the RIP constant but also allows computing the probability of having all spurious solutions in any given ball.

In the special case of rank $r=1$, the conditions \eqref{eq:ineq1_min} and \eqref{eq:ineq2_min} in Theorem~\ref{thm:global} can be substituted with a simpler condition as shown below. Its proof is highly similar to that of Lemma \ref{lem:case2}, and obviated in this paper for succinctness.

\begin{theorem}\label{thm:global_1}
Consider the case $r=1$ and assume that the linear operator $\mathcal A$ satisfies the $\delta$-$\RIP_2$ property with $\delta<1/2$. For every $\epsilon>0$, with probability at least $\mathbb P(\norm{\mathbf A^Tw} \leq \epsilon)$, every arbitrary local minimizer $\hat X \in \RR^{n \times r}$ of problem \eqref{eqn:main_problem_1} satisfies
\begin{equation}\label{eq:ineqrank1}
\norm{\hat X\hat X^T-M^*}_F \leq \frac{3(1+\sqrt 2)\epsilon(1+\delta)}{1-2\delta}.
\end{equation}
\end{theorem}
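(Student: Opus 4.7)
The plan is to specialize Theorem~\ref{thm:global} to $r = r^* = 1$ and argue that its two-case bound collapses into the scalar bound \eqref{eq:ineqrank1}. The RIP hypothesis $\delta < 1/(1+\sqrt{r^*/r}) = 1/2$ coincides with that of Theorem~\ref{thm:global_1}, and since $\hat X$ is an $n$-vector, $\sigma_r(\hat X)$ reduces to $\norm{\hat X}$. In Case 1, where $\norm{\hat X} \le \sqrt{\epsilon/(1+\delta)}$, inequality \eqref{eq:ineq1_min} immediately yields $\norm{\hat X\hat X^\top - M^*}_F \le \epsilon/(1-\delta)$ by upper-bounding the square root by $\epsilon$; this is dominated by the target $3(1+\sqrt 2)\epsilon(1+\delta)/(1-2\delta)$ for every $\delta \in [0, 1/2)$ via the elementary estimate $3(1+\sqrt 2)(1-\delta^2) \ge 1-2\delta$.

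Case 2 ($\norm{\hat X} > \sqrt{\epsilon/(1+\delta)}$) carries the bulk of the work. Specializing \eqref{eq:ineq2_min} to $r = r^* = 1$ gives
\[
\frac{2(1-2\delta)}{3(1+\delta)}\, D \le 2\epsilon + 2\sqrt{2\epsilon(1+\delta)}\bigl(\sqrt{D} + \sqrt{\norm{M^*}_F}\bigr),
\]
where $D := \norm{\hat X\hat X^\top - M^*}_F$, and the unknown $\norm{M^*}_F$ must be eliminated. I would use the rank-1 identity $D \ge \bigl|\norm{\hat X}^2 - \norm{M^*}_F\bigr|$, which follows from $\norm{\hat X\hat X^\top - x^*(x^*)^\top}_F^2 \ge (\norm{\hat X}^2 - \norm{x^*}^2)^2$ together with $\norm{M^*}_F = \norm{x^*}^2$, to obtain $\sqrt{\norm{M^*}_F} \le \norm{\hat X} + \sqrt{D}$. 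I would then bound $\norm{\hat X}^2$ using the critical-point relation $\nabla f(\hat X) = 0$ combined with $(\delta, 2)$-RIP: left-multiplying by $\hat X^\top$ and applying RIP and Cauchy--Schwarz gives $(1-\delta)\norm{\hat X}^2 \le (1+\delta)\norm{M^*}_F + \epsilon$. Substituting back produces a scalar quadratic inequality in $\sqrt{D}$, which the quadratic formula turns into the closed-form bound \eqref{eq:ineqrank1}.

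The hard part is the algebraic bookkeeping needed to extract the specific constant $3(1+\sqrt 2)$. The factor $1+\sqrt 2$ is the positive root of $t^2 - 2t - 1 = 0$, which arises naturally when the two linear $\sqrt{D}$-contributions (one from \eqref{eq:ineq2_min} directly and one from the $\sqrt{\norm{M^*}_F}$ substitution) are balanced against the inhomogeneous $\epsilon$-term in the final quadratic; additional care is needed to decouple the system of two inequalities relating $\norm{\hat X}^2$ and $\norm{M^*}_F$ before the quadratic formula can be applied cleanly.
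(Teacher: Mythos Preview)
Your plan has a genuine gap: specializing Theorem~\ref{thm:global} cannot produce the linear-in-$\epsilon$ bound of Theorem~\ref{thm:global_1}. Look at the scaling. In Case~2, inequality~\eqref{eq:ineq2_min} reads
\[
\frac{2(1-2\delta)}{3(1+\delta)}\,D \;\le\; 2\epsilon + 2\sqrt{2\epsilon(1+\delta)}\bigl(\sqrt{D}+\sqrt{\norm{M^*}_F}\bigr),
\]
so for small $\epsilon$ the dominant right-hand term is $2\sqrt{2\epsilon(1+\delta)}\sqrt{\norm{M^*}_F}=O(\sqrt{\epsilon})$, forcing $D=O(\sqrt{\epsilon\,\norm{M^*}_F})$. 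Your subsequent manipulations do not change this: the gradient identity you derive, $(1-\delta)\norm{\hat X}^2\le(1+\delta)\norm{M^*}_F+\epsilon$, still leaves $\norm{M^*}_F$ in the bound, and the substitution $\sqrt{\norm{M^*}_F}\le\norm{\hat X}+\sqrt{D}$ then feeds $\norm{M^*}_F$ back in. No amount of algebraic bookkeeping recovers the target scaling $D=O(\epsilon)$, which is independent of $\norm{M^*}_F$.

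The paper does not specialize Theorem~\ref{thm:global}; it re-enters the dual construction behind Lemma~\ref{lem:case2} and exploits a cancellation that is \emph{lost} once one passes to the final form~\eqref{eq:ineq2_min}. In the dual objective the relevant term is $4\epsilon\norm{\hat X}_2\norm{y}$, and from \eqref{eq:ynorm} one has $\norm{y}\le\bigl(\sqrt{2}\,\sigma_r(\hat X)\norm{\mathbf e}\bigr)^{-1}$. In the general proof this is combined with the case hypothesis $\sigma_r(\hat X)>\sqrt{\epsilon/(1+\delta)}$, yielding the $\sqrt{\epsilon}$ term you inherit. But in rank~1 one has $\sigma_r(\hat X)=\norm{\hat X}_2$ exactly, so $4\epsilon\norm{\hat X}_2\norm{y}\le 2\sqrt{2}\,\epsilon/\norm{\mathbf e}$: the $\norm{\hat X}_2$ cancels and the contribution is linear in $\epsilon$. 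Plugging this into the dual bound gives $\frac{1-\delta}{1+\delta}\le\frac{1}{3}+\frac{(2+2\sqrt{2})\epsilon}{\norm{\mathbf e}}$, which rearranges directly to~\eqref{eq:ineqrank1}. There is also no two-case split on $\sigma_r(\hat X)$; the only side case is $\hat X=0$, handled by an explicit dual certificate. The constant $1+\sqrt{2}$ comes from $\tr(W)\le 1/\norm{\mathbf e}$ together with the $2\sqrt{2}\epsilon/\norm{\mathbf e}$ term, not from a quadratic in $\sqrt{D}$.
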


In the case when the RIP constant $\delta$ is not less than $1/2$, it is not possible to achieve a global guarantee similar to Theorem~\ref{thm:global} or Theorem~\ref{thm:global_1} since it is known that the problem may have a spurious solution even in the noiseless case. Instead, we turn to local guarantees by showing that every arbitrary local minimizer $\hat X$ of problem \eqref{eqn:main_problem_1} is either close to the ground truth $M^*$ or far away from it in terms of the distance $\norm{\hat X\hat X^T-M^*}_F$.

\begin{theorem}\label{thm:local_r}
Assume that the linear operator $\mathcal{A}$ satisfies the $\delta$-RIP$_{2r}$ property for some $\delta \in [0,1)$. Consider arbitrary constants $\epsilon>0$ and $\tau \in (0,2(\sqrt{2}-1))$ such that
\[
\delta<\sqrt{1-\frac{3+2\sqrt 2}{4}\tau^2}.
\]
Every arbitrary local minimizer $\hat X \in \RR^{n \times r}$ of problem \eqref{eqn:main_problem_1} satisfying
\begin{equation}\label{eq:localregion}
		 \|\hat X\hat X^T - M^*\|_F \leq \tau \lambda_r(M^*)
\end{equation}
will also satisfy
\begin{equation}\label{eq:deltalocalineq}
\norm{\hat X\hat X^T-M^*}_F \leq \frac{\epsilon(1+\delta)C(\tau,M^*)}{\sqrt{1-\frac{3+2\sqrt 2}{4}\tau^2}-\delta}
\end{equation}
with probability at least $\mathbb{P}(\norm{\mathbf A^Tw} \leq  \epsilon)$, where
\[
C(\tau,M^*)=\sqrt{\frac{2(\lambda_1(M^*)+\tau \lambda_r(M^*))}{(1-\tau)\lambda_r(M^*)} }.
\]
\end{theorem}

The upper bounds in \eqref{eq:localregion} and \eqref{eq:deltalocalineq} define an outer ball and an inner ball centered at the ground truth $M^*$. Theorem~\ref{thm:local_r} states that there is no local minimizer in the ring between the two balls. As $\epsilon$ approaches zero, the inner ball shrinks to the ground truth. This theorem shows that bad local minimizers are located outside the outer ball. Note that the problem could be highly non-convex when $\delta$ is close to 1, while this theorem shows a benign landscape in a local neighborhood of the solution. Furthermore, all the theorems in this section are applicable to arbitrary noise models since they make no explicit use of the probability distribution of the noise. The only required information is the probability $\mathbb{P}(\norm{\mathbf A^Tw} \leq  \epsilon)$, which can be computed or bounded when the probability distribution of the noise is given as illustrated in Section~\ref{sec:demo}.

The results presented above are all about the location of the local minimizers. They do not directly lead to the global convergence of local search methods with a fast convergence rate. To provide performance guarantees for local search methods, the next theorem establishes a stronger property for the landscape of the noisy problem that is usually called the strict saddle property in the literature, which essentially says that all approximate second-order critical points are close to the ground truth.

\begin{theorem}\label{thm:global_strictsaddle}
Assume that the linear operator $\mathcal A$ satisfies the $\delta$-$\RIP_{2r}$ property with $\delta<1/2$. For every $\epsilon>0$ and $\kappa \geq 0$, with probability at least $\mathbb P(\norm{\mathbf A^Tw} \leq \epsilon)$, either of the following two inequalities
\begin{subequations}
\begin{gather}
\begin{split}
(1&-\delta)\norm{\hat X\hat X^T-M^*}_F^2 \leq
\epsilon \sqrt{r} \norm{\hat X\hat X^T-M^*}_F \\
&+\frac{r^{1/4}\kappa}{2} \norm{\hat X\hat X^T-M^*}_F^{1/2}+\frac{r^{1/4}\kappa}{2} \norm{M^*}_F^{1/2}\\
&+ (4 \sqrt r \epsilon + \frac{5\sqrt r\kappa}{2})\norm{M^*}_F
\end{split}\label{eq:ineq1} \\
\begin{split}
&\frac{2(1-2\delta)}{3(1+\delta)}\norm{\hat X\hat X^T-M^*}_F \leq (2\epsilon+\kappa)\sqrt r+\sqrt{2\kappa(1+\delta)} \\
&\hspace{2em}+2\sqrt{2\epsilon(1+\delta)}(\norm{\hat X\hat X^T-M^*}_F^{1/2}+\norm{M^*}_F^{1/2})
\end{split}\label{eq:ineq2}
\end{gather}
\end{subequations}
holds for every matrix $\hat X \in \RR^{n \times r}$ satisfying
\[
\norm{\nabla f(\hat X)} \leq \kappa, \quad \nabla^2f(\hat X) \succeq -\kappa I_{nr}.
\]
\end{theorem}

Note that this property is not proven in the literature for $\delta<1/2$ even in the noiseless case, and thus our result generalizes the existing ones even in this scenario. On the other hand, it is proven by \citet{JGNK2017} that the perturbed gradient descent method with an arbitrary initialization will find a solution $\hat X$ satisfying the requirements in Theorem~\ref{thm:global_strictsaddle} with a high probability in $O(\mathrm{poly}(1/\kappa))$ number of iterations. By Theorem~\ref{thm:global_strictsaddle}, $\hat X\hat X^T$ will be close to the ground truth if $\epsilon$ and $\kappa$ are chosen to be relatively small.

Table~\ref{table:results_comparison} briefly summarizes our result compared with the existing literature.

\begin{table*}
\centering
\renewcommand*{\arraystretch}{1.5}
\begin{tabular}{lllll}
\toprule
\multicolumn{1}{l}{Paper} & \multicolumn{1}{l}{Noise} & \multicolumn{1}{l}{RIP Assumption} & \multicolumn{1}{l}{Rank} &\multicolumn{1}{l}{Convergence} \\\midrule
\citet{bhojanapalli2016global} & Isotropic Gaussian & $\delta < 1/10$ & rank $r$ & N/A                     \\
\citet{zhang2019sharp} & Noiseless & $\delta<1/2$ & rank 1 & N/A                     \\
\citet{Zhang2021-p} & Noiseless & $\delta<1/2$ & rank $r$ & N/A \\
Ours & Finite Variance & $\delta<1/2$ & rank $r$ & Polynomial                     \\\bottomrule
\end{tabular}
\caption{Comparison between our result and the existing literature.}\label{table:results_comparison}

\end{table*}

\section{Proofs of Main Results}\label{sec:proof}

Before presenting the proofs, we first compute the gradient and the Hessian of the objective function $f(\hat X)$ of the problem \eqref{eqn:main_problem_1}:
\begin{gather*}
\nabla f(\hat X)=\hat{\mathbf X}^T\mathbf A^T(\mathbf A\mathbf e+w), \\
\nabla^2f(\hat X)=2I_r \otimes \mat_S(\mathbf A^T(\mathbf A\mathbf e+w))+\hat{\mathbf X}^T\mathbf A^T\mathbf A\hat{\mathbf X},
\end{gather*}
where
\[
\mathbf e=\vecc(\hat X\hat X^T-M^*),
\]
and
$\hat{\mathbf X} \in \mathbb R^{n^2 \times nr}$ is the matrix satisfying
\[
	\hat{\mathbf X} \vecc(U) = \vecc(\hat XU^T +U\hat X^T), \quad \forall U \in \mathbb{R}^{n \times r}.
\]

The first step in the proofs is to derive necessary conditions for a matrix $\hat X \in \RR^{n \times r}$ to be an approximate second-order critical point, which depend on the linear operator $\mathcal A$, the noise $w \in \RR^m$, the solution $\hat X$, and the parameter $\kappa$ characterizing how close $\hat X$ is to a true second-order critical point.

\begin{lemma}\label{lem:necessary}
Given $\kappa \geq 0$, assume that $\hat X \in \RR^{n \times r}$ satisfies
\[
\norm{\nabla f(\hat X)} \leq \kappa, \quad \nabla^2f(\hat X) \succeq -\kappa I_{nr}.
\]
Then, it must satisfy the following inequalities:
\begin{subequations}
\begin{gather}
\norm{\hat{\mathbf X}^T\mathbf H\mathbf e} \leq 2\norm{\hat X}_2\norm{\mathbf A^Tw}+\kappa, \label{eq:foc} \\
2I_r \otimes \mat_S(\mathbf H\mathbf e)+\hat{\mathbf X}^T\mathbf H\hat{\mathbf X} \succeq -(2\norm{\mathbf A^Tw}+\kappa)I_{nr}, \label{eq:soc}
\end{gather}
\end{subequations}
where $\mathbf H=\mathbf A^T\mathbf A$.
\end{lemma}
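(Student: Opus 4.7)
The plan is to separate the ``noiseless'' contribution from the ``noise'' contribution in both the gradient and the Hessian expressions, apply the approximate second-order conditions to the full quantity, and use operator/Frobenius norm bounds to absorb the noise term on the right-hand side.

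I would first decompose
\[
\nabla f(\hat X) = \hat{\mathbf X}^\top \mathbf H \mathbf e + \hat{\mathbf X}^\top \mathbf A^\top w, \qquad
\nabla^2 f(\hat X) = \bigl(2I_r \otimes \mat_S(\mathbf H \mathbf e) + \hat{\mathbf X}^\top \mathbf H \hat{\mathbf X}\bigr) + 2I_r \otimes \mat_S(\mathbf A^\top w),
\]
directly from the formulas stated at the beginning of Section~\ref{sec:proof}. Both \eqref{eq:foc} and \eqref{eq:soc} will then follow by moving the noise pieces to the right-hand side and estimating them.

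The one preliminary computation I need is the action of $\hat{\mathbf X}^\top$. Using $\hat{\mathbf X}\vecc(U) = \vecc(\hat X U^\top + U \hat X^\top)$ and duality with the Frobenius inner product, a short calculation gives $\hat{\mathbf X}^\top v = 2\vecc(\mat_S(v)\,\hat X)$ for any $v \in \RR^{n^2}$. Combined with $\|AB\|_F \leq \|A\|_F \|B\|_2$ and the fact that $\|\mat_S(v)\|_F \leq \|v\|$ (since symmetrization is a contraction in Frobenius norm), this yields the key estimate $\|\hat{\mathbf X}^\top v\| \leq 2\|v\|\,\|\hat X\|_2$. Applied with $v = \mathbf A^\top w$, the first-order assumption and the triangle inequality give
\[
\|\hat{\mathbf X}^\top \mathbf H \mathbf e\| \leq \|\nabla f(\hat X)\| + \|\hat{\mathbf X}^\top \mathbf A^\top w\| \leq \kappa\|\hat X\|_2 + 2\|\mathbf A^\top w\|\,\|\hat X\|_2,
\]
which is exactly \eqref{eq:foc}.

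For \eqref{eq:soc}, I rearrange the Hessian decomposition to isolate the desired expression and apply $\nabla^2 f(\hat X) \succeq -\kappa I_{nr}$. It remains to upper-bound the operator norm of the perturbation $2I_r \otimes \mat_S(\mathbf A^\top w)$. Since the operator norm of a Kronecker product is the product of the operator norms, and $\|\mat_S(\mathbf A^\top w)\|_2 \leq \|\mat_S(\mathbf A^\top w)\|_F \leq \|\mathbf A^\top w\|$, we obtain $2I_r \otimes \mat_S(\mathbf A^\top w) \succeq -2\|\mathbf A^\top w\|\,I_{nr}$. Adding the two inequalities yields \eqref{eq:soc}. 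No step here presents a real obstacle; the only subtlety is correctly identifying the transpose action of $\hat{\mathbf X}$ so that the factor of $2$ and the symmetrization $\mat_S$ appear in the right places.
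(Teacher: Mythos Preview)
Your proposal is correct and follows essentially the same route as the paper: decompose the gradient and Hessian into noiseless and noise parts, then bound the noise contributions via $\|\hat{\mathbf X}^\top v\| \leq 2\|v\|\,\|\hat X\|_2$ and $\|\mat_S(\mathbf A^\top w)\|_2 \leq \|\mathbf A^\top w\|$. The only cosmetic difference is that you compute $\hat{\mathbf X}^\top v = 2\vecc(\mat_S(v)\hat X)$ explicitly via duality, whereas the paper bounds the operator norm of $\hat{\mathbf X}$ through its forward action; both yield the same constant $2$.
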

\begin{proof}
To obtain condition \eqref{eq:foc}, notice that $\norm{\nabla f(\hat X)} \leq \kappa$ implies that
\begin{multline*}
\norm{\hat{\mathbf X}^T\mathbf H\mathbf e} \leq \norm{\hat{\mathbf X}^T\mathbf A^Tw}+\kappa \leq \norm{\hat{\mathbf X}}_2\norm{\mathbf A^Tw}+\kappa \\
\leq 2\norm{\hat X}_2\norm{\mathbf A^Tw}+\kappa,
\end{multline*}
in which the last inequality is due to
\[
\norm{\hat{\mathbf X}\vecc(U)}=\norm{\hat XU^T+U\hat X^T}_F \leq 2\norm{\hat X}_2\norm{U}_F,
\]
for every $U \in \RR^{n \times r}$. Similarly, $\nabla^2f(\hat X) \succeq -\kappa I_{nr}$ implies that
\[
2I_r \otimes \mat_S(\mathbf H\mathbf e)+\hat{\mathbf X}^T\mathbf H\hat{\mathbf X} \succeq -2I_r \otimes \mat_S(\mathbf A^Tw)-\kappa I_{nr}.
\]
On the other hand, the eigenvalues of $I_r \otimes \mat_S(\mathbf A^Tw)$ are the same as those of $\mat_S(\mathbf A^Tw)$, and each eigenvalue $\lambda_i(\mat_S(\mathbf A^Tw))$ of the latter matrix further satisfies
\[
\abs{\lambda_i(\mat_S(\mathbf A^Tw))} \leq \norm{\mat_S(\mathbf A^Tw)}_F \leq \norm{\mathbf A^Tw},
\]
which proves condition \eqref{eq:soc}.
\end{proof}

If $\hat X$ is a local minimizer of the problem \eqref{eqn:main_problem_1}, Lemma~\ref{lem:necessary} shows that $\hat X$ satisfies the inequalities \eqref{eq:foc} and \eqref{eq:soc} with $\kappa=0$. Similarly, Theorem~\ref{thm:global} can also be regarded as a special case of Theorem~\ref{thm:global_strictsaddle} with $\kappa=0$. The proofs of these two theorems consist of inspecting two cases. The following lemma deals with the first case in which $\hat X$ is an approximate second-order critical point with $\sigma_r(\hat X)$ being close to zero.

\begin{lemma}\label{lem:case1}
Given $\hat X \in \RR^{n \times r}$ and arbitrary constants $\epsilon>0$ and $\kappa \geq 0$, the inequalities
\[
\sigma_r(\hat X) \leq \sqrt\frac{\epsilon+\kappa}{1+\delta}, \; \norm{\nabla f(\hat X)} \leq \kappa, \; \nabla^2f(\hat X) \succeq -\kappa I_{nr}
\]
and
$\norm{\mathbf A^Tw} \leq \epsilon$ will together imply the inequality \eqref{eq:ineq1}.
\end{lemma}
\begin{proof}
Let $G=\mat_S(\mathbf H\mathbf e)$ and $u \in \RR^n$ be a unit eigenvector of $G$ corresponding to its minimum eigenvalue, i.e.,
\[
\norm{u}=1, \quad Gu=\lambda_{\min}(G)u.
\]
In addition, let $v \in \RR^r$ be a singular vector of $\hat X$ such that
\[
\norm{v}=1, \quad \norm{\hat Xv}=\sigma_r(\hat X).
\]
Let $\mathbf U=\vecc(uv^T)$. Then, $\norm{\mathbf U} \leq 1$ and \eqref{eq:soc} implies that
\begin{align}
-2\epsilon&-\kappa \leq 2\mathbf U^T(I_r \otimes \mat_S(\mathbf H\mathbf e))\mathbf U+\mathbf U^T\hat{\mathbf X}^T\mathbf H\hat{\mathbf X}\mathbf U \nonumber \\
&\leq 2\tr(vu^TGuv^T)+(1+\delta)\norm{\hat Xvu^T+uv^T\hat X^T}_F^2 \nonumber \\
&\leq 2\lambda_{\min}(G)+4(1+\delta)\sigma_r(\hat X)^2 \nonumber \\
&\leq 2\lambda_{\min}(G)+4\epsilon+4\kappa. \label{eq:mineig}
\end{align}
On the other hand,
\begin{align*}
(1-\delta)&\norm{\hat X\hat X^T-M^*}_F^2 \leq \mathbf e^T\mathbf H\mathbf e \\
&=\vecc(\hat X\hat X^T)^T\mathbf H\mathbf e-\vecc(M^*)^T\mathbf H\mathbf e \\
&=\frac{1}{2}\vecc(\hat X)^T\hat{\mathbf X}^T\mathbf H\mathbf e-\langle M^*,\mat_S(\mathbf H\mathbf e)\rangle \\
&\leq \frac{1}{2}\norm{\hat X}_2\norm{\hat{\mathbf X}^T\mathbf H\mathbf e}+\left(3\epsilon+\frac{5\kappa}{2}\right)\tr(M^*) \\
&\leq \epsilon\norm{\hat X}_2^2+\frac{\kappa}{2}\norm{\hat X}_2+\left(3\epsilon+\frac{5\kappa}{2}\right)\tr(M^*),
\end{align*}
in which the second last inequality is due to \eqref{eq:mineig} and the last inequality is due to \eqref{eq:foc}. Furthermore, the right-hand side of the above inequality can be relaxed as
\begin{align*}
\epsilon&\norm{\hat X}_2^2+\frac{\kappa}{2}\norm{\hat X}_2+\left(3\epsilon+\frac{5\kappa}{2}\right)\tr(M^*) \leq \epsilon \sqrt{r} \norm{\hat X\hat X^T}_F \\
&+\frac{r^{1/4}\kappa}{2} \norm{\hat X\hat X^T}_F^{1/2}+\left(3\epsilon+\frac{5\kappa}{2}\right)\sqrt r\norm{M^*}_F \\
&\leq \epsilon \sqrt{r} \norm{\hat X\hat X^T-M^*}_F+\frac{r^{1/4}\kappa}{2} \norm{\hat X\hat X^T-M^*}_F^{1/2} \\
&\hspace{1em}+ \left(4 \sqrt r \epsilon + \frac{5\sqrt r\kappa}{2}\right)\norm{M^*}_F+\frac{r^{1/4}\kappa}{2} \norm{M^*}_F^{1/2},
\end{align*}
which leads to the inequality \eqref{eq:ineq1}.
\end{proof}

The remaining case with
\[
\sigma_r(\hat X)>\sqrt\frac{\epsilon+\kappa}{1+\delta}
\]
will be handled in the following lemma using a different method.
\begin{lemma}\label{lem:case2}
Assume that the linear operator $\mathcal A$ satisfies the $\delta$-$\RIP_{2r}$ property with $\delta<1/2$. Given $\hat X \in \RR^{n \times r}$ and arbitrary constants $\epsilon>0$ and $\kappa \geq 0$, the inequalities
\[
\sigma_r(\hat X)>\sqrt\frac{\epsilon+\kappa}{1+\delta}, \; \norm{\nabla f(\hat X)} \leq \kappa, \; \nabla^2f(\hat X) \succeq -\kappa I_{nr}
\]
and
$\norm{\mathbf A^Tw} \leq \epsilon$ will together imply the inequality \eqref{eq:ineq2}.
\end{lemma}

The proofs of both Lemma~\ref{lem:case2} and the local guarantee in Theorem~\ref{thm:local_r} generalize the proof of the absence of spurious local minima for the noiseless problem in \citet{zhang2020many,Zhang2021-p}. Our innovation here is to develop new techniques to analyze approximate optimality conditions for the solutions because unlike the noiseless problem the local minimizers of the noisy one are only approximate second-order critical points of the distance function $\|\mathcal A(XX^T)-b\|^2$. For a fixed solution $\hat X$ and noise $w$, one can find an operator $\hat{\mathcal A}$ satisfying the $\delta$-$\RIP_{2r}$ property with the smallest possible $\delta$ such that $\hat X$ and $\hat{\mathcal A}$ satisfy the necessary conditions stated in Lemma~\ref{lem:necessary}. Let $\delta^*(\hat X)$ be the RIP constant of the found measurement operator $\hat{\mathcal A}$ in the worst-case scenario. Then, if $\hat X$ in Lemma~\ref{lem:case2} is a solution of the current problem with the linear operator $\mathcal A$ satisfying the $\delta$-$\RIP_{2r}$ property, it holds that $\delta \geq \delta^*(\hat X)$, which can further lead to an upper bound on the distance $\norm{\hat X\hat X^T-M^*}_F$.

To compute $\delta^*(\hat X)$ defined above, let $q=\mathbf A^Tw$ and solve the following optimization problem whose optimal value is $\delta^*(\hat X)$:
\begin{equation}\label{eqn:pre_lmi}
\begin{aligned}
\min_{\delta,\hat{\mathbf H}} \quad & \delta \\
\text{s.t.} \quad & \norm{\hat{\mathbf X}^T\hat{\mathbf H}\mathbf e} \leq 2\norm{\hat X}_2\norm{q}+\kappa, \\
& 2I_r \otimes \mat_S(\hat{\mathbf H}\mathbf e)+\hat{\mathbf X}^T\hat{\mathbf H}\hat{\mathbf X} \succeq -(2\norm{q}+\kappa)I_{nr}, \\
& \text{$\hat{\mathbf H}$ is symmetric and satisfies the $\delta$-$\RIP_{2r}$ property}.
\end{aligned}
\end{equation}
Note that a matrix $\hat{\mathbf H} \in \RR^{n^2 \times n^2}$ is said to satisfy the $\delta$-$\RIP_{2r}$ property if
\[
(1-\delta)\norm{\mathbf U}^2 \leq \mathbf U^T\hat{\mathbf H}\mathbf U \leq (1+\delta)\norm{\mathbf U}^2
\]
holds for every matrix $U \in \RR^{n \times n}$ with $\rk(U) \leq 2r$ and $\mathbf U=\vecc(U)$. Obviously, for a linear operator $\hat{\mathcal A}$, $\hat{\mathbf H}=\hat{\mathbf A}^T\hat{\mathbf A}$ satisfies the $\delta$-$\RIP_{2r}$ property if and only if $\hat{\mathbf A}$ satisfies the $\delta$-$\RIP_{2r}$ property.

However, since problem \eqref{eqn:pre_lmi} is non-convex due to the RIP constraint, we instead solve the following convex reformulation:
\begin{equation}\label{eqn:lmi}
\begin{aligned}
\min_{\delta,\hat{\mathbf H}} \quad & \delta \\
\st \quad & \norm{\hat{\mathbf X}^T\hat{\mathbf H}\mathbf e} \leq 2\norm{\hat X}_2\norm{q}+\kappa, \\
& 2I_r \otimes \mat_S(\hat{\mathbf H}\mathbf e)+\hat{\mathbf X}^T\hat{\mathbf H}\hat{\mathbf X} \succeq -(2\norm{q}+\kappa)I_{nr}, \\
& (1-\delta)I_{n^2} \preceq \hat{\mathbf H} \preceq (1+\delta)I_{n^2}.
\end{aligned}
\end{equation}
Lemma 14 in \citet{bi2020global} proves that problem \eqref{eqn:pre_lmi} and problem \eqref{eqn:lmi} have the same optimal value. The remaining step in the proof of Lemma~\ref{lem:case2} is to solve the optimization problem \eqref{eqn:lmi} for given $\hat X$, $q$ and $\kappa$. The complete proof of Lemma~\ref{lem:case2} is lengthy and deferred to Appendix~\ref{app:case2proof}. Finally, Theorem~\ref{thm:global} and Theorem~\ref{thm:global_strictsaddle} are direct consequences of Lemma~\ref{lem:case1} and Lemma~\ref{lem:case2}. The proof of Theorem~\ref{thm:global_1} is very similar to that of Lemma~\ref{lem:case2} and is also given in Appendix~\ref{app:case2proof}.

Now, we turn to the proof of the local guarantee in Theorem~\ref{thm:local_r}. The following existing result will be useful.

\begin{lemma}[from Lemma~14 in \citet{zhang2019sharp}]
	\label{lem:richard_14}
	Given $a,b \in \mathbb{R}^n$, the rank-2 matrix $ab^T + ba^T$ has two possibly nonzero eigenvalues
\[
\|a\|\|b\|(1+\cos \theta), \quad -\|a\|\|b\|(1-\cos \theta).
\]
Here, $\theta$ is the angle between $a$ and $b$.
\end{lemma}

\begin{proof}[Proof of Theorem~\ref{thm:local_r}]
First, we relax the optimization problem \eqref{eqn:lmi} by dropping the constraint related to the second-order necessary optimality condition. This gives rise to the optimization problem
\begin{equation}\label{eqn:lmi_stationary}
\begin{aligned}
\min_{\delta,\hat{\mathbf H}} \quad & \delta \\
\st \quad & \norm{\hat{\mathbf X}^T\hat{\mathbf H}\mathbf e} \leq 2\norm{\hat X}_2\norm{q}, \\
& (1-\delta)I_{n^2} \preceq \hat{\mathbf H} \preceq (1+\delta)I_{n^2}.
\end{aligned}
\end{equation}
To further simplify the problem \eqref{eqn:lmi_stationary}, one can replace its decision variable $\delta$ with $\eta$ and introduce the following optimization problem:
\begin{equation}\label{eqn:eta_stationary}
\begin{aligned}
\max_{\eta,\hat{\mathbf H}} \quad & \eta \\
\st \quad & \norm{\hat{\mathbf X}^T\hat{\mathbf H}\mathbf e} \leq 2\norm{\hat X}_2\norm{q}, \\
& \eta I_{n^2} \preceq \hat{\mathbf H} \preceq I_{n^2}.
\end{aligned}
\end{equation}
Given any feasible solution $(\delta,\hat{\mathbf H})$ to \eqref{eqn:lmi_stationary}, the tuple
\[
\left(\frac{1-\delta}{1+\delta},\frac{1}{1+\delta}\hat{\mathbf H}\right)
\]
is a feasible solution to problem \eqref{eqn:eta_stationary}. Therefore, if the optimal value of \eqref{eqn:lmi_stationary} is denoted as $\delta_f^*(\hat X)$ and the optimal value of \eqref{eqn:eta_stationary} is denoted as $\eta_f^*(\hat X)$, then it holds that
\begin{equation}\label{eqn:delta_eta}
\eta_f^*(\hat X) \geq \frac{1-\delta_f^*(\hat X)}{1+\delta_f^*(\hat X)} \geq \frac{1-\delta^*(\hat X)}{1+\delta^*(\hat X)} \geq \frac{1-\delta}{1+\delta},
\end{equation}
in which the last inequality is implied by $\delta \geq \delta^*(\hat X)$ as shown above. To prove the inequality \eqref{eq:deltalocalineq}, we need to bound $\eta_f^*(\hat X)$ from above, which can be achieved by finding a feasible solution to the dual problem of \eqref{eqn:eta_stationary} given below:
\begin{equation}\label{eqn:eta_dual}
\begin{aligned}
\min_{U_1,U_2,G,\lambda,y} \quad & \tr(U_2)+4\norm{\hat X}^2_2\norm{q}^2 \lambda+\tr(G) \\
\st \quad & \tr(U_1)=1, \\
& (\hat{\mathbf X}y)\mathbf e^T+\mathbf e(\hat{\mathbf X}y)^T=U_1-U_2, \\
& \begin{bmatrix}
G & -y \\
-y^T & \lambda
\end{bmatrix} \succeq 0, \\
& U_1 \succeq 0, \quad U_2 \succeq 0.
\end{aligned}
\end{equation}

For any matrix $\hat X \in \RR^{n \times r}$ satisfying $\norm{\hat X\hat X^T-M^*}_F \leq \tau \lambda_r(M^*)$, we have $\hat X \neq 0$, and it has been shown in the proof of Lemma 19 in \citet{bi2020global} that there exists $y \neq 0$ satisfying the inequalities
\begin{subequations}
\begin{gather}
\norm{\hat{\mathbf X}y}^2 \geq 2\lambda_r(\hat X\hat X^T)\norm{y}^2, \label{eq:yineq1} \\
\cos\theta \geq \sqrt{1-\frac{3+2\sqrt 2}{4}\tau^2}, \label{eq:yineq2}
\end{gather}
\end{subequations}
where $\theta$ is the angle between $\hat{\mathbf X} y$ and $\mathbf e$. Note that \eqref{eq:yineq2} holds only in the exact-parameterized regime, i.e., the case with $\rk(M^*) = r$, since the derivation of \eqref{eq:yineq2} utilized Lemma~5.4 of \citet{TBSS2016}, which does not hold when $\rk(M^*) < r$. This is the main reason why our result cannot be directly generalized to the overparameterized case. Now, define
\[
M=(\hat{\mathbf X} y)\mathbf e^T+\mathbf e(\hat{\mathbf X} y)^T,
\]
and then decompose $M$ as $M = [M]_+ - [M]_-$ with $[M]_+ \succeq 0$ and $[M]_- \succeq 0$. Then, it is easy to verify that $(U_1^*,U_2^*,G^*,\lambda^*,y^*)$ defined as
\begin{gather*}
y^* = \frac{y}{\tr([M]_+)}, \quad U_1^* =  \frac{[M]_+}{\tr([M]_+)}, \quad U_2^* = \frac{[M]_-}{\tr([M]_+)}, \\
G^* = \frac{y^* (y^*)^T}{ \lambda^*}, \quad \lambda^* = \frac{\| y^* \|}{2\norm{ \hat X}_2\norm{q}}
\end{gather*}
forms a feasible solution to the dual problem \eqref{eqn:eta_dual} with the objective value
\begin{equation}\label{eq:dualobj}
\frac{\tr([M]_-)+ 4\norm{\hat X}_2\norm{q} \| y\|}{\tr([M]_+)}.
\end{equation}

Furthermore, $\rank(M^*)=r$ implies that $\lambda_r(M^*) > 0$. By the Wielandt--Hoffman theorem,
\begin{align*}
	&| \lambda_r(\hat X\hat X^T) - \lambda_r(M^*)| \leq \|\hat X\hat X^T-M^*\|_F \leq \tau \lambda_r(M^*), \\
	&| \lambda_1(\hat X\hat X^T) - \lambda_1(M^*)| \leq \|\hat X\hat X^T-M^*\|_F \leq \tau \lambda_r(M^*).
\end{align*}
Thus, using the above two inequalities and inequality \eqref{eq:yineq1}, we have
\begin{multline}\label{eq:yineq3}
	\frac{2 \norm{\hat X}_2 \|y\|}{\|\hat{\mathbf X} y\|} \leq \frac{2 \norm{\hat X}_2}{\sqrt{2 \lambda_r(\hat X\hat X^T)}} \\
\leq \sqrt{\frac{2(\lambda_1(M^*)+\tau \lambda_r(M^*))}{(1-\tau)\lambda_r(M^*)} }=C(\tau,M^*).
\end{multline}
Next, according to Lemma~\ref{lem:richard_14}, one can write
\begin{gather*}
\tr([M]_+) = \|\hat{\mathbf X} y\| \|\mathbf e\| (1+ \cos \theta), \\
\tr([M]_-) = \|\hat{\mathbf X} y\| \|\mathbf e\| (1- \cos \theta).
\end{gather*}
Substituting the above two equations and \eqref{eq:yineq3} into the dual objective value \eqref{eq:dualobj}, one can obtain
\[
\eta_f^*(\hat X) \leq \frac{1- \cos \theta+2C(\tau,M^*)\norm{q}/\norm{\mathbf e}}{1+\cos \theta},
\]
which together with \eqref{eqn:delta_eta} implies that
\[
\norm{\mathbf e} \leq (1+\delta)C(\tau,M^*)\norm{q}(\cos\theta-\delta)^{-1}.
\]
The inequality \eqref{eq:deltalocalineq} can then be proved by combining the above inequality and \eqref{eq:yineq2} under the probabilistic event that $\norm{q} \leq \epsilon$.
\end{proof}

\begin{figure*}[t]
    \centering
    \begin{subfigure}{6.5cm}
    	    \includegraphics[width=\linewidth]{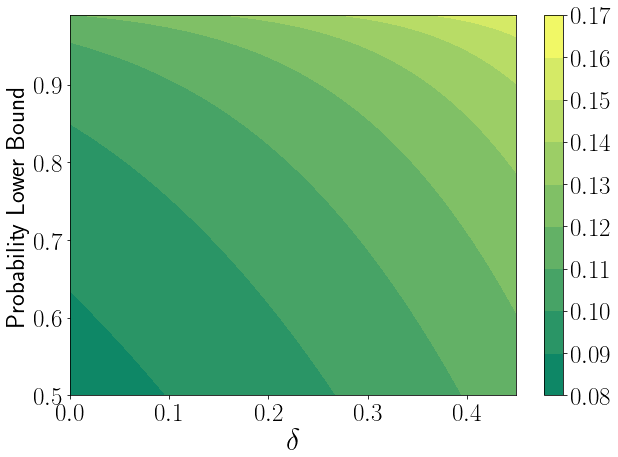}
    \caption{The upper bound derived from inequality \eqref{eq:ineq1_min}.}
    \end{subfigure} \hspace{2em}
    \begin{subfigure}{6.5cm}
    	    \includegraphics[width=\linewidth]{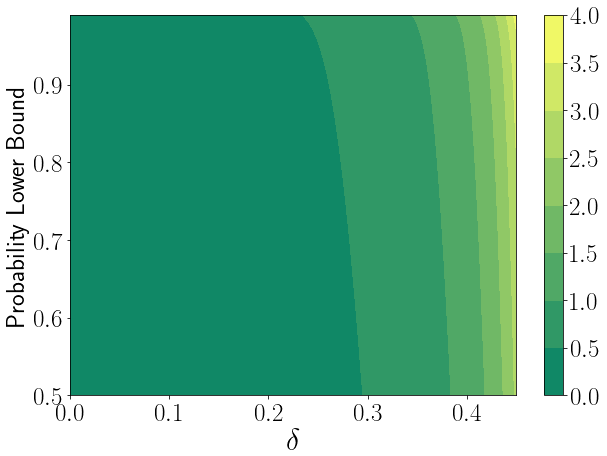}
    \caption{The upper bound derived from inequality \eqref{eq:ineq2_min}.}
    \end{subfigure}
    \caption{Comparison of the upper bounds given by Theorem~\ref{thm:global} for the distance $\|\hat X\hat X^T - M^*\|_F$ with $\hat X$ being an arbitrary local minimizer.}
    \label{fig:bound_compare}
\end{figure*}
\begin{figure*}[t]
    \centering
    \begin{subfigure}{6.5cm}
    	    \includegraphics[width=\linewidth]{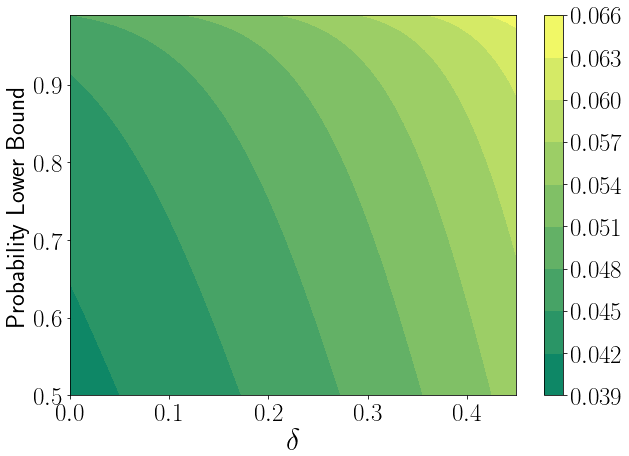}
    \caption{The upper bound derived from inequality \eqref{eq:ineq1_min}.}
    \end{subfigure} \hspace{2em}
    \begin{subfigure}{6.5cm}
    	    \includegraphics[width=\linewidth]{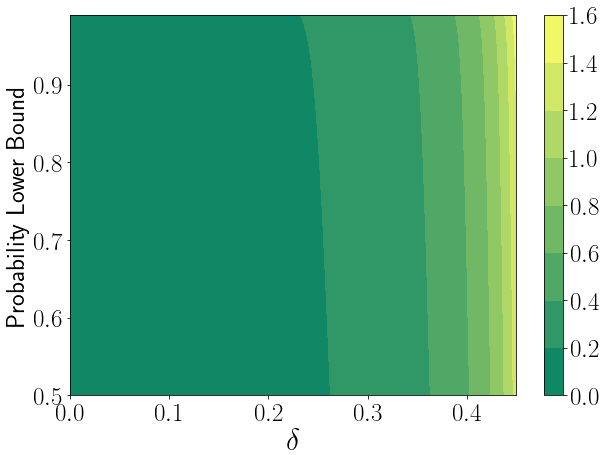}
    \caption{The upper bound derived from inequality \eqref{eq:ineq2_min}.}
    \end{subfigure}
    \caption{Upper bounds given by Theorem~\ref{thm:global} for the distance $\|\hat X\hat X^T - M^*\|_F$ based on the explicit distribution on the noise $w$.}
    \label{fig:bound_compare_iso}
\end{figure*}

\section{Numerical Illustration}\label{sec:demo}

In the next, we will empirically study the developed probabilistic guarantees and demonstrate the distance $\|\hat X\hat X^T- M^*\|_F$ between any local minimizer $\hat X$ and the ground truth $M^*$ as well as the value of the RIP constant $\delta$ required to be satisfied by the linear operator $\mathcal A$.

Before delving into the numerical illustration, note that the probability $\mathbb{P}(\norm{\mathbf A^Tw} \leq  \epsilon)$ used in both Theorem~\ref{thm:global} and Theorem~\ref{thm:local_r} can be bounded from below by the probability $\mathbb{P}(\norm{w} \leq w_0)$ with $w_0=\epsilon/\norm{\mathbf A}_2$. The latter probability can be easily estimated when the probability distribution of the noise $w$ is given. As an example, in the simplest case when $w$ is sampled from an isotropic Gaussian distribution, i.e., $w \sim \mathcal{N}(0,\sigma^2 I_m)$, the random variable $\norm{w/\sigma}^2$ follows the chi-square distribution and one can apply the Chernoff bound to obtain
\begin{equation*}
	\begin{aligned}
		\mathbb{P}(\|w\| \leq w_0) &= 1- \mathbb{P}\left(\norm*{\frac{w}{\sigma}}^2 \geq \frac{w_0^2}{\sigma^2}\right) \\
		&\geq  1 - \inf_{ 0 \leq t<1/2} (1-2t)^{-m/2} \mathrm e^{- tw_0^2/\sigma^2}.
	\end{aligned}
\end{equation*}

After solving the minimization problem in the above equation, we obtain
\[
		1 - \left(\frac{2m\sigma^2}{w_0^2}\right)^{-m/2}\mathrm e^{m-\frac{w_0^2}{2 \sigma^2}} \leq \mathbb{P}(\| w \| \leq w_0).
\]
More generally, if $w$ is a $(\sigma/\sqrt{m})$-sub-Gaussian vector, then applying Lemma 1 in \citet{jin2019short} leads to
\[
	1- 2\mathrm e^{-\frac{w_0^2}{16m \sigma^2}} \leq \mathbb{P}(\|w\| \leq w_0).
\]

\begin{figure*}[t]
    \centering
    \begin{subfigure}{6.5cm}
    	    \includegraphics[width=\linewidth]{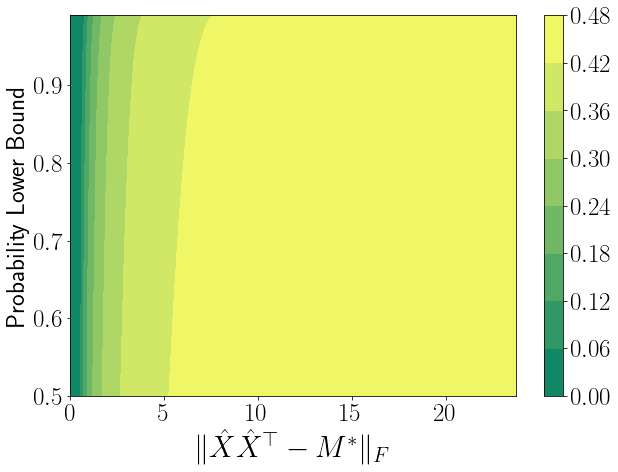}
    \caption{$\delta$ bound in Theorem~\ref{thm:global} with $\tau=+\infty$.}
    \end{subfigure}\hspace{2em}
    \begin{subfigure}{6.5cm}
    	    \includegraphics[width=\linewidth]{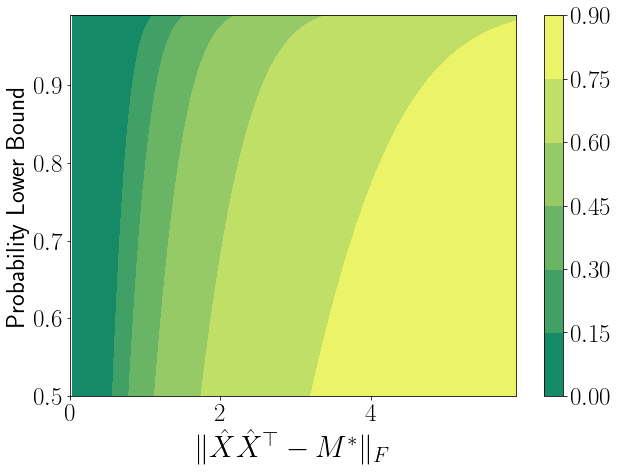}
    \caption{$\delta$ bound in Theorem~\ref{thm:local_r} with $\tau = 0.2$.}
    \end{subfigure}\vspace{2em}\\
    \begin{subfigure}{6.5cm}
    	    \includegraphics[width=\linewidth]{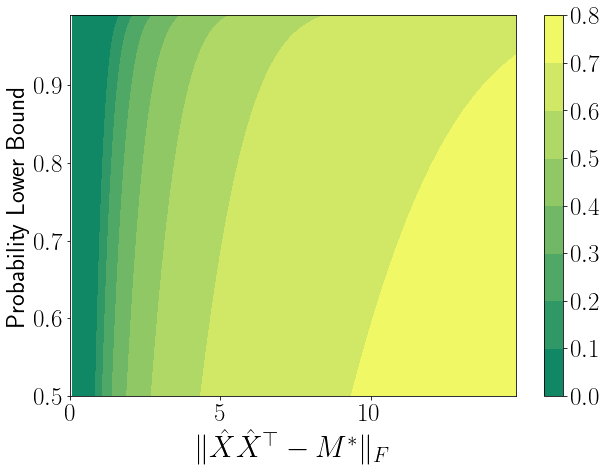}
    \caption{$\delta$ bound in Theorem~\ref{thm:local_r} with $\tau=0.5$.}
    \end{subfigure}\hspace{2em}
    \begin{subfigure}{6.5cm}
    	    \includegraphics[width=\linewidth]{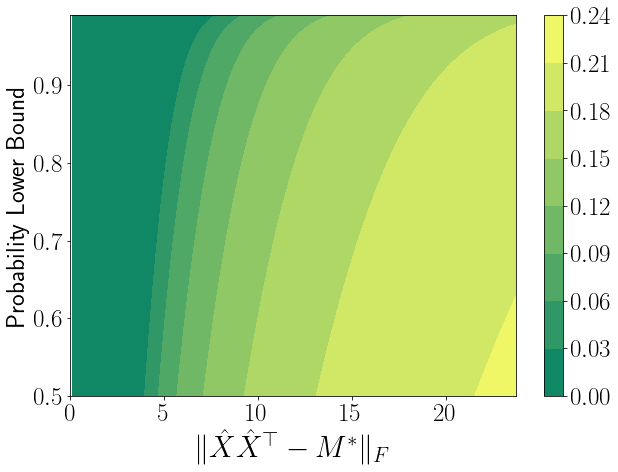}
    \caption{$\delta$ bound in Theorem~\ref{thm:local_r} with $\tau=0.8$.}
    \end{subfigure}
    \caption{Comparison of the maximum RIP constants $\delta$ allowed by Theorem~\ref{thm:global} and Theorem~\ref{thm:local_r} to guarantee a given maximum distance $\|\hat X\hat X^T - M^*\|_F$ for an arbitrary local minimizer $\hat X$ satisfying \eqref{eq:localregion} with a given probability.}
    \label{fig:thm_compare}
\end{figure*}

For numerical illustration, assume that $n=50$, $m=10$ and $\norm{\mathbf A}_2 \leq 2$, while the noise $w$ is a $(0.05/\sqrt{m})$-sub-Gaussian vector. We also assume that the ground truth $M^*$ is of rank 5 with the largest eigenvalue being 1.5 and the smallest eigenvalue being 1.

First, we explore the two inequalities \eqref{eq:ineq1_min} and \eqref{eq:ineq2_min} in Theorem~\ref{thm:global} to obtain two upper bounds on $\|\hat X\hat X^T - M^*\|_F$, where $\hat X$ denotes any arbitrary (worst) local minimizer. Figure~\ref{fig:bound_compare} gives the contour plots of the two upper bounds on $\|\hat X\hat X^T - M^*\|_F$, which hold with the given probability on the $y$-axis and the given RIP constant $\delta$ from 0 to $1/2$ on the $x$-axis. While the final bound on $\|\hat X\hat X^T - M^*\|_F$ is often determined by the inequality \eqref{eq:ineq2_min}, the inequality \eqref{eq:ineq1_min} is needed theoretically to deal with the case when $\hat X$ has a singular value close to 0.

Furthermore, a tighter bound on the success probability can be derived by calculating the exact probability $\mathbb P(\|\mathbf A^Tw\| \leq \epsilon)$ for an explicit distribution on $w$. Figure~\ref{fig:bound_compare_iso} is obtained in this way under the same assumptions as those for Figure~\ref{fig:bound_compare} except that $w$ is isotropic Gaussian with the same parameter in the sub-Gaussian assumption. Compared with Figure~\ref{fig:bound_compare}, the shape is similar, but the bound is tighter.

Next, we illustrate the bounds given by Theorem~\ref{thm:global} and Theorem~\ref{thm:local_r}. Figure~\ref{fig:thm_compare} shows the contour plots of the maximum RIP constant $\delta$ that is necessary to guarantee that each local minimizer $\hat X$ (satisfying the inequality \eqref{eq:localregion} when Theorem~\ref{thm:local_r} is applied) lies within a certain neighborhood of the ground truth (measured via the distance $\|\hat X\hat X^T - M^*\|_F$ on the $x$-axis) with a given probability on the $y$-axis, as implied by the respective global and local guarantees. Figure~\ref{fig:thm_compare} clearly shows how a smaller RIP constant $\delta$ leads to a tighter bound on the distance $\|\hat X\hat X^T - M^*\|_F$ with a higher probability. In addition, the local guarantee generally requires a looser RIP assumption as it still holds even when $\delta>1/2$. However, as the parameter $\tau$ in Theorem~\ref{thm:local_r} increases, the local bound also degrades quickly, sometimes becoming worse than the global bound as illustrated in Figure~\ref{fig:thm_compare}(d).

Moreover, in our experiment, we have also tried different values of problem parameters $m$ and $n$. They all yield similar results and are included in Appendix~\ref{app:figures} for completeness.

\section{Conclusion}

In this paper, we develop global and local analyses for the locations of the local minima of the low-rank matrix recovery problem with noisy linear measurements. Unlike the existing results, the probability distribution of the noise is arbitrary and the RIP constant of the problem is free to take any arbitrary value. The developed results encompass the state-of-the-art results on the non-existence of spurious solutions in the noiseless case. Furthermore, we prove the strict saddle property, which guarantees the global convergence of the perturbed gradient descent method in polynomial time. Our analyses show how the value of the RIP constant and the intensity of noise affect the landscape of the non-convex learning problem and the locations of the local minima relative to the ground truth. Future research directions include extending our results into the cases when the matrices are asymmetric, the measurements are nonlinear, or the overparameterized regime in which $\rank(M^*)$ is less than $r$.

\bibliography{references}

\section*{Acknowledgments}

This work was supported by grants from AFOSR, ARO, ONR, and NSF.

\newpage
\onecolumn
\appendix

\section{Remark on Theorem~\ref{thm:global}}\label{app:remark}

The two upper bounds on the distance $\norm{\hat X\hat X^T-M^*}_F$ can be obtained for any local minimizer $\hat X$ by solving the two quadratic-like inequalities \eqref{eq:ineq1_min} and \eqref{eq:ineq2_min}, and the larger bound needs to be used. To be explicit,
\[
\norm{\hat X\hat X^T-M^*}_F \leq \max\{T_1,T_2\},
\]
with
\begin{align*}
T_1&=\frac{\epsilon \sqrt{r} + \sqrt{r \epsilon^2 + 16(1-\delta)\epsilon \sqrt{r}}}{2(1-\delta)}, \\
T_2&=\left(\frac{2\sqrt{6\epsilon (1+\delta)^3}+ 3\sqrt{8\epsilon(1+\delta)^3+\frac{8}{3}(1-2\delta)(1+\delta)(2\epsilon r + 2\sqrt{2\epsilon(1+\delta)}\|M^*\|_F^{1/2})}}{4(1-2\delta)}\right)^2.
\end{align*}

\section{Proofs of Lemma~\ref{lem:case2} and Theorem~\ref{thm:global_1}}\label{app:case2proof}

\begin{proof}[Proof of Lemma~\ref{lem:case2}]
Let $Z \in \RR^{n \times r}$ be a matrix satisfying $ZZ^T=M^*$. Similar to the proof of Theorem~\ref{thm:local_r}, we introduce an optimization problem as follows:
\begin{equation}\label{eq:etaopt}
\begin{aligned}
\max_{\eta,\hat{\mathbf H}} \quad & \eta \\
\st \quad & \norm{\hat{\mathbf X}^T\hat{\mathbf H}\mathbf e} \leq 2\norm{\hat X}_2\epsilon+\kappa, \\
& 2I_r \otimes \mat_S(\hat{\mathbf H}\mathbf e)+\hat{\mathbf X}^T\hat{\mathbf X} \succeq -(2\epsilon+\kappa)I_{nr}, \\
& \eta I_{n^2} \preceq \hat{\mathbf H} \preceq I_{n^2},
\end{aligned}
\end{equation}
where its optimal value $\eta^*(\hat X)$ satisfies the inequality
\begin{equation}\label{eq:etaineq}
\eta^*(\hat X) \geq \frac{1-\delta^*(\hat X)}{1+\delta^*(\hat X)} \geq \frac{1-\delta}{1+\delta}.
\end{equation}
In the remaining part, we will prove the following upper bound on $\eta^*(\hat X)$:
\begin{equation}\label{eq:etaupper}
\eta^*(\hat X) \leq \frac{1}{3}+\frac{(2\epsilon+\kappa)\sqrt r+\sqrt{2\kappa(1+\delta)}+2\sqrt{2\epsilon(1+\delta)}\norm{\hat X}_2}{\norm{\mathbf e}}.
\end{equation}
The inequality \eqref{eq:ineq2} is a consequence of \eqref{eq:etaineq}, \eqref{eq:etaupper} and the inequality
\[
\norm{\hat X}_2 \leq \norm{\hat X\hat X^T}_F^{1/2} \leq \norm{\hat X\hat X^T-M^*}_F^{1/2}+\norm{M^*}_F^{1/2}.
\]

The proof of the upper bound \eqref{eq:etaupper} can be completed by finding a feasible solution to the dual problem of \eqref{eq:etaopt}:
\begin{equation}\label{eq:etaoptdual}
\begin{aligned}
\min_{\substack{U_1,U_2,W, \\ G,\lambda,y}} \quad & \tr(U_2)+\langle\hat{\mathbf X}^T\hat{\mathbf X},W\rangle+(2\epsilon+\kappa)\tr(W)+(2\norm{\hat X}_2\epsilon+\kappa)^2\lambda+\tr(G) \\
\st \quad & \tr(U_1)=1, \\
& (\hat{\mathbf X}y-w)\mathbf e^T+\mathbf e(\hat{\mathbf X}y-w)^T=U_1-U_2, \\
& \begin{bmatrix}
G & -y \\
-y^T & \lambda
\end{bmatrix} \succeq 0, \\
& U_1 \succeq 0, \quad U_2 \succeq 0, \quad W=\begin{bmatrix}
W_{1,1} & \cdots & W_{r,1}^T \\
\vdots & \ddots & \vdots \\
W_{r,1} & \cdots & W_{r,r}
\end{bmatrix} \succeq 0, \\
& w=\sum_{i=1}^r\vect(W_{i,i}).
\end{aligned}
\end{equation}

Before describing the choice of the dual feasible solution, we need to represent the error vector $\mathbf e$ in a different form. Let $\mathcal P \in \RR^{n \times n}$ be the orthogonal projection matrix onto the range of $\hat X$, and $\mathcal P_\perp \in \RR^{n \times n}$ be the orthogonal projection matrix onto the orthogonal complement of the range of $\hat X$. Then, $Z$ can be decomposed as $Z=\mathcal PZ+\mathcal P_\perp Z$, and there exists a matrix $R \in \RR^{r \times r}$ such that $\mathcal PZ=\hat XR$. Note that
\[
ZZ^T=\mathcal PZZ^T\mathcal P+\mathcal PZZ^T\mathcal P_\perp+\mathcal P_\perp ZZ^T\mathcal P+\mathcal P_\perp ZZ^T\mathcal P_\perp.
\]
Thus, if we choose
\begin{equation}\label{eq:ydef}
\hat Y=\frac{1}{2}\hat X-\frac{1}{2}\hat XRR^T-\mathcal P_\perp ZR^T, \quad \hat y=\vect(\hat Y),
\end{equation}
then it can be verified that
\begin{gather*}
\hat X\hat Y^T+\hat Y\hat X^T-\mathcal P_\perp ZZ^T\mathcal P_\perp=\hat X\hat X^T-ZZ^T, \\
\langle \hat X\hat Y^T+\hat Y\hat X^T,\mathcal P_\perp ZZ^T\mathcal P_\perp\rangle=0.
\end{gather*}
Moreover, we have
\begin{equation}\label{eq:ynorm}
\begin{aligned}
\norm{\hat X\hat Y^T+\hat Y\hat X^T}_F^2&=2\tr(\hat X^T\hat X\hat Y^T\hat Y)+\tr(\hat X^T\hat Y\hat X^T\hat Y)+\tr(\hat Y^T\hat X\hat Y^T\hat X) \\
&\geq 2\tr(\hat X^T\hat X\hat Y^T\hat Y) \geq 2\sigma_r(\hat X)^2\norm{\hat Y}_F^2,
\end{aligned}
\end{equation}
in which the first inequality is due to
\[
\tr(\hat X^T\hat Y\hat X^T\hat Y)=\frac{1}{4}\tr((\hat X^T\hat X(I_r-RR^T))^2)=\frac{1}{4}\tr((\hat X(I_r-RR^T)\hat X^T)^2) \geq 0.
\]

Assume first that $Z_\perp=\mathcal P_\perp Z \neq 0$. The other case will be handled at the end of this proof. In the case when $Z_\perp \neq 0$, we also have $\hat X\hat Y^T+\hat Y\hat X^T \neq 0$. Otherwise, the inequality \eqref{eq:ynorm} and the assumption $\sigma_r(\hat X)>0$ imply that $\hat Y=0$. The orthogonality and the definition of $\hat Y$ in \eqref{eq:ydef} then give rise to
\[
\hat X-\hat XRR^T=0, \quad \mathcal P_\perp ZR^T=0.
\]
The first equation above implies that $R$ is invertible since $\hat X$ has full column rank, which contradicts $Z_\perp \neq 0$. Now, define the unit vectors
\[
\hat u_1=\frac{\hat{\mathbf X}\hat y}{\norm{\hat{\mathbf X}\hat y}}, \quad \hat u_2=\frac{\vect(Z_\perp Z_\perp^T)}{\norm{Z_\perp Z_\perp^T}_F}.
\]
Then, $\hat u_1 \perp \hat u_2$ and
\begin{equation}\label{eq:epolar}
\mathbf e=\norm{\mathbf e}(\sqrt{1-\alpha^2}\hat u_1-\alpha\hat u_2)
\end{equation}
with
\begin{equation}\label{eq:alphadef}
\alpha=\frac{\norm{Z_\perp Z_\perp^T}_F}{\norm{\hat X\hat X^T-ZZ^T}_F}.
\end{equation}

We first describe our choices of the dual variables $W$ and $y$ (which will be scaled later). Let
\[
\hat X^T\hat X=QSQ^T, \quad Z_\perp Z_\perp^T=PGP^T,
\]
with orthogonal matrices $Q,P$ and diagonal matrices $S,G$ such that $S_{11}=\sigma_r(\hat X)^2$. Fix a constant $\gamma \in [0,1]$ that is to be determined and define
\begin{gather*}
V_i=k^{1/2}G_{ii}^{1/2}PE_{i1}Q^T, \quad \forall i=1,\dots,r, \\
W=\sum_{i=1}^r\vect(V_i)\vect(V_i)^T, \quad y=l\hat y,
\end{gather*}
with $\hat y$ defined in \eqref{eq:ydef} and
\[
k=\frac{\gamma}{\norm{\mathbf e}\norm{Z_\perp Z_\perp^T}_F}, \quad l=\frac{\sqrt{1-\gamma^2}}{\norm{\mathbf e}\norm{\hat{\mathbf X}\hat y}}.
\]
Here, $E_{ij}$ is the elementary matrix of size $n \times r$ with the $(i,j)$-entry being $1$. By our construction, $\hat X^TV_i=0$, which implies that
\begin{equation}\label{eq:dualobj1}
\langle\hat{\mathbf X}^T\hat{\mathbf X},W\rangle=\sum_{i=1}^r\norm{\hat XV_i^T+V_i\hat X^T}_F^2=2\sum_{i=1}^r\tr(\hat X^T\hat XV_i^TV_i)=2k\sigma_r(\hat X)^2\sum_{i=1}^rG_{ii}=2\beta\gamma,
\end{equation}
with
\begin{equation}\label{eq:betadef}
\beta=\frac{\sigma_r(\hat X)^2\tr(Z_\perp Z_\perp^T)}{\norm{\hat X\hat X^T-ZZ^T}_F\norm{Z_\perp Z_\perp^T}_F}.
\end{equation}
In addition,
\begin{equation}\label{eq:dualobj2}
\tr(W)=\sum_{i=1}^r\norm{V_i}_F^2=k\sum_{i=1}^rG_{ii}=k\tr(Z_\perp Z_\perp^T) \leq \frac{\sqrt r}{\norm{\mathbf e}},
\end{equation}
and
\[
w=\sum_{i=1}^r\vect(W_{i,i})=\sum_{i=1}^rV_iV_i^T=kZ_\perp Z_\perp^T.
\]
Therefore,
\[
\hat{\mathbf X}y-w=\frac{1}{\norm{\mathbf e}}(\sqrt{1-\gamma^2}\hat u_1-\gamma\hat u_2),
\]
which together with \eqref{eq:epolar} implies that
\begin{equation}\label{eq:dualobj3}
\norm{\mathbf e}\norm{\hat{\mathbf X}y-w}=1, \quad \langle\mathbf e,\hat{\mathbf X}y-w\rangle=\gamma\alpha+\sqrt{1-\gamma^2}\sqrt{1-\alpha^2}=\psi(\gamma).
\end{equation}
Next, the inequality \eqref{eq:ynorm} and the assumption on $\sigma_r(\hat X)$ imply that
\begin{equation}\label{eq:dualobj4}
\epsilon\norm{y} \leq \frac{\sqrt{1-\gamma^2}\epsilon}{\sqrt 2\sigma_r(\hat X)\norm{\mathbf e}} \leq \frac{\sqrt{1+\delta}\epsilon}{\sqrt{2(\epsilon+\kappa)}\norm{\mathbf e}} \leq \frac{\sqrt{\epsilon(1+\delta)}}{\sqrt 2\norm{\mathbf e}}
\end{equation}
and similarly
\begin{equation}\label{eq:dualobj5}
\kappa\norm{y} \leq \frac{\sqrt{\kappa(1+\delta)}}{\sqrt 2\norm{\mathbf e}}.
\end{equation}

Define
\[
M=(\hat{\mathbf X}y-w)\mathbf e^T+\mathbf e(\hat{\mathbf X}y-w)^T,
\]
and decompose $M$ as $M=[M]_+-[M]_-$ in which both $[M]_+ \succeq 0$ and $[M]_- \succeq 0$. Let $\theta$ be the angle between $\mathbf e$ and $\hat{\mathbf X}y-w$. By Lemma~\ref{lem:richard_14}, we have
\[
\tr([M]_+)=\norm{\mathbf e}\norm{\hat{\mathbf X}y-w}(1+\cos\theta), \quad \tr([M]_-)=\norm{\mathbf e}\norm{\hat{\mathbf X}y-w}(1-\cos\theta).
\]
Now, one can verify that $(U_1^*,U_2^*,W^*,G^*,\lambda^*,y^*)$ defined as
\begin{gather*}
U_1^*=\frac{[M]_+}{\tr([M]_+)}, \quad U_2^*=\frac{[M]_-}{\tr([M]_+)}, \quad y^*=\frac{y}{\tr([M]_+)}, \\
W^*=\frac{W}{\tr([M]_+)}, \quad \lambda^*=\frac{\norm{y^*}}{2\norm{\hat X}_2\epsilon+\kappa}, \quad G^*=\frac{1}{\lambda^*}y^*y^{*T}
\end{gather*}
forms a feasible solution to the dual problem \eqref{eq:etaoptdual} whose objective value is equal to
\[
\frac{\tr([M]_-)+\langle\hat{\mathbf X}^T\hat{\mathbf X},W\rangle+(2\epsilon+\kappa)\tr(W)+2(2\norm{\hat X}_2\epsilon+\kappa)\norm{y}}{\tr([M]_+)}.
\]
Substituting \eqref{eq:dualobj1}, \eqref{eq:dualobj2}, \eqref{eq:dualobj3}, \eqref{eq:dualobj4} and \eqref{eq:dualobj5} into the above equation, we obtain
\begin{align*}
\eta^*(\hat X) &\leq \frac{2\beta\gamma+1-\psi(\gamma)+((2\epsilon+\kappa)\sqrt r+\sqrt{2\kappa(1+\delta)}+2\sqrt{2\epsilon(1+\delta)}\norm{\hat X}_2)/\norm{\mathbf e}}{1+\psi(\gamma)} \\
&\leq \frac{2\beta\gamma+1-\psi(\gamma)}{1+\psi(\gamma)}+\frac{(2\epsilon+\kappa)\sqrt r+\sqrt{2\kappa(1+\delta)}+2\sqrt{2\epsilon(1+\delta)}\norm{\hat X}_2}{\norm{\mathbf e}}.
\end{align*}
Choosing the best value of the parameter $\gamma \in [0,1]$ to minimize the far right-side of the above inequality leads to
\[
\frac{2\beta\gamma+1-\psi(\gamma)}{1+\psi(\gamma)} \leq \eta_0(\hat X),
\]
with
\[
\eta_0(\hat X):=\begin{dcases*}
\frac{1-\sqrt{1-\alpha^2}}{1+\sqrt{1-\alpha^2}}, & if $\beta \geq \dfrac{\alpha}{1+\sqrt{1-\alpha^2}}$, \\
\frac{\beta(\alpha-\beta)}{1-\beta\alpha}, & if $\beta \leq \dfrac{\alpha}{1+\sqrt{1-\alpha^2}}$.
\end{dcases*}
\]
Here, $\alpha$ and $\beta$ are defined in \eqref{eq:alphadef} and \eqref{eq:betadef}, respectively. In the proof of Theorem~1.2 in \citet{Zhang2021-p}, it is shown that $\eta_0(\hat X) \leq 1/3$ for every $\hat X$ with $\hat X\hat X^T \neq ZZ^T$, which gives the upper bound \eqref{eq:etaupper}.

Finally, we still need to deal with the case when $\mathcal P_\perp Z=0$. In this case, we know that $\hat{\mathbf X}\hat y=\mathbf e$ with $\hat y$ defined in \eqref{eq:ydef}. Then, it is easy to check that $(U_1^*,U_2^*,W^*,G^*,\lambda^*,y^*)$ defined as
\begin{gather*}
U_1^*=\frac{\mathbf e\mathbf e^T}{\norm{\mathbf e}^2}, \quad U_2^*=0, \quad y^*=\frac{\hat y}{2\norm{\mathbf e}^2}, \\
W^*=0, \quad \lambda^*=\frac{\norm{y^*}}{2\norm{\hat X}_2\epsilon+\kappa}, \quad G^*=\frac{1}{\lambda^*}y^*y^{*T}
\end{gather*}
forms a feasible solution to the dual problem \eqref{eq:etaoptdual} whose objective value is $2(2\norm{\hat X}_2\epsilon+\kappa)\norm{y^*}$. By the inequality \eqref{eq:ynorm}, we have
\[
\eta^*(\hat X) \leq 2(2\norm{\hat X}_2\epsilon+\kappa)\norm{y^*} \leq \frac{\kappa/\sqrt 2+\sqrt 2\epsilon\norm{\hat X}_2}{\sigma_r(\hat X)\norm{\mathbf e}} \leq \frac{\sqrt{\kappa(1+\delta)/2}+\sqrt{2\epsilon(1+\delta)}\norm{\hat X}_2}{\norm{\mathbf e}}.
\]
Hence, the upper bound \eqref{eq:etaupper} still holds in this case.
\end{proof}

\begin{proof}[Proof of Theorem~\ref{thm:global_1}]
The proof of Theorem~\ref{thm:global_1} is similar to the above proof of Lemma~\ref{lem:case2} in the situation with $\kappa=0$, and we will only emphasize the difference here. In the case when $\hat X \neq 0$, after constructing the feasible solution to the dual problem \eqref{eq:etaoptdual}, we have
\begin{equation}\label{eq:global1dual}
\frac{1-\delta}{1+\delta} \leq \eta^*(\hat X) \leq \frac{\tr([M]_-)+\langle\hat{\mathbf X}^T\hat{\mathbf X},W\rangle+2\epsilon\tr(W)+4\norm{\hat X}_2\epsilon\norm{y}}{\tr([M]_+)}.
\end{equation}
Note that in the rank-1 case, one can write $\sigma_r(\hat X)=\norm{\hat X}_2$ and
\[
\norm{y} \leq \frac{\norm{\hat y}}{\norm{\mathbf e}\norm{\hat{\mathbf X}\hat y}} \leq \frac{1}{\sqrt 2\norm{\hat X}_2\norm{\mathbf e}},
\]
in which the last inequality is due to \eqref{eq:ynorm}. Substituting \eqref{eq:dualobj1}, \eqref{eq:dualobj2}, \eqref{eq:dualobj3} and the above inequality into \eqref{eq:global1dual} and choosing an appropriate $\gamma$ as shown in the proof of Lemma~\ref{lem:case2}, we obtain
\begin{align*}
\frac{1-\delta}{1+\delta} \leq \eta^*(\hat X) &\leq \frac{2\beta\gamma+1-\psi(\gamma)+(2\epsilon+2\sqrt{2}\epsilon)/\norm{\mathbf e}}{1+\psi(\gamma)} \\
&\leq \frac{1}{3}+\frac{2\epsilon+2\sqrt{2}\epsilon}{\norm{\mathbf e}},
\end{align*}
which implies inequality \eqref{eq:ineqrank1} under the probabilistic event that $\norm{q} \leq \epsilon$.

In the case when $\hat X=0$, $(U_1^*,U_2^*,W^*,G^*,\lambda^*,y^*)$ defined as
\begin{gather*}
U_1^*=\frac{\mathbf e\mathbf e^T}{\norm{\mathbf e}^2}, \quad U_2^*=0, \quad y^*=0, \\
W^*=\frac{ZZ^T}{2\norm{\mathbf e}^2}, \quad \lambda^*=0, \quad G^*=0
\end{gather*}
forms a feasible solution to the dual problem \eqref{eq:etaoptdual}, which shows that
\[
\frac{1-\delta}{1+\delta} \leq \eta^*(\hat X) \leq \frac{\epsilon}{\norm{\mathbf e}}.
\]
The above inequality also implies inequality \eqref{eq:ineqrank1} under the probabilistic event that $\norm{q} \leq \epsilon$.
\end{proof}

\newpage
\section{Additional Numerical Illustration}\label{app:figures}

\begin{figure}[!h]
    \centering
    \begin{subfigure}{6.5cm}
    	    \includegraphics[width=\linewidth]{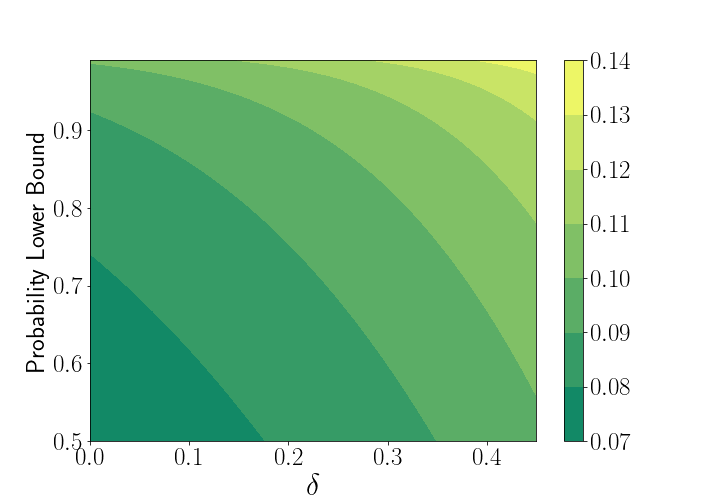}
    \caption{The upper bound derived from inequality \eqref{eq:ineq1_min}, $m=20,n=60$.}
    \end{subfigure} \hspace{2em}
    \begin{subfigure}{6.5cm}
    	    \includegraphics[width=\linewidth]{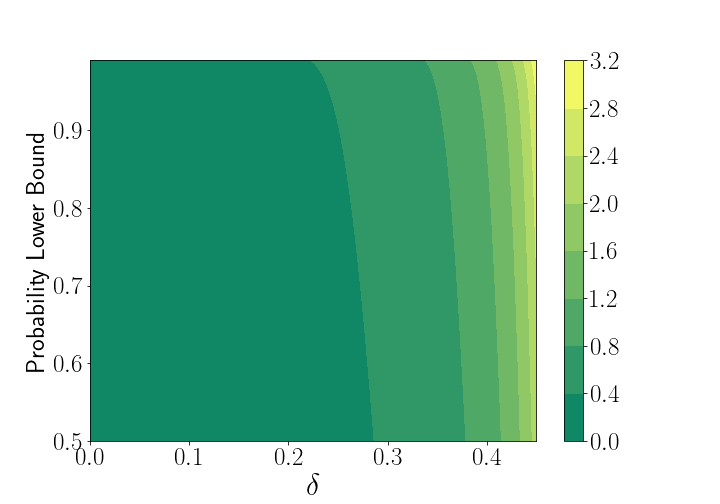}
    \caption{The upper bound derived from inequality \eqref{eq:ineq2_min}, $m=20,n=60$.}
    \end{subfigure}\vspace{2em}\\
    \begin{subfigure}{6.5cm}
    	    \includegraphics[width=\linewidth]{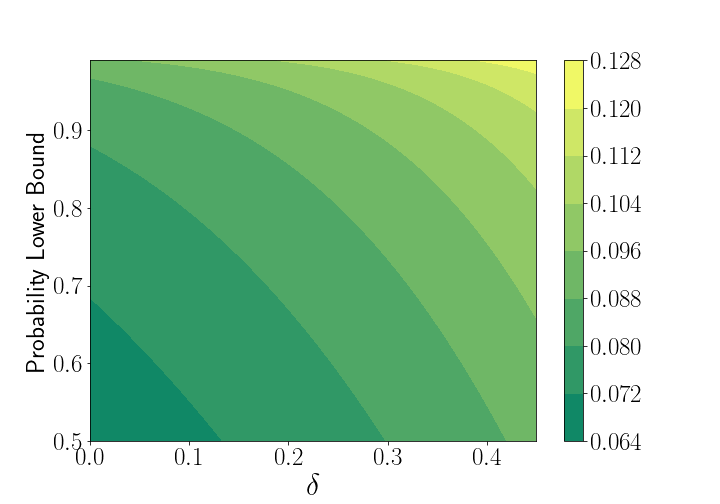}
    \caption{The upper bound derived from inequality \eqref{eq:ineq1_min}, $m=30,n=60$.}
    \end{subfigure} \hspace{2em}
    \begin{subfigure}{6.5cm}
    	    \includegraphics[width=\linewidth]{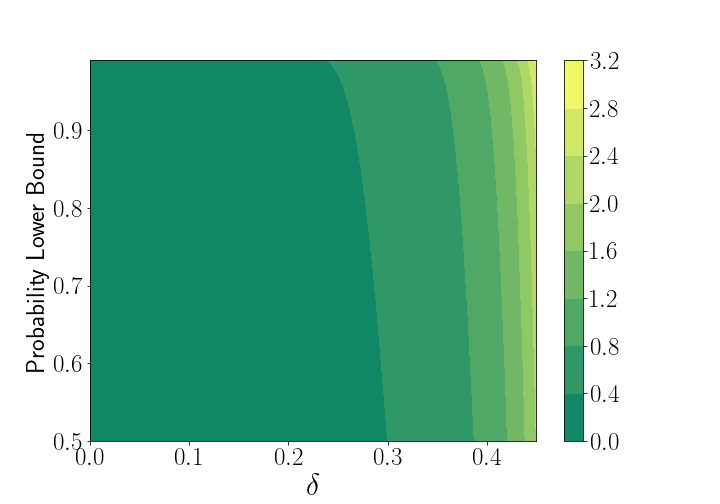}
    \caption{The upper bound derived from inequality \eqref{eq:ineq2_min}, $m=30,n=60$.}
    \end{subfigure}\vspace{2em}\\
    \begin{subfigure}{6.5cm}
    	    \includegraphics[width=\linewidth]{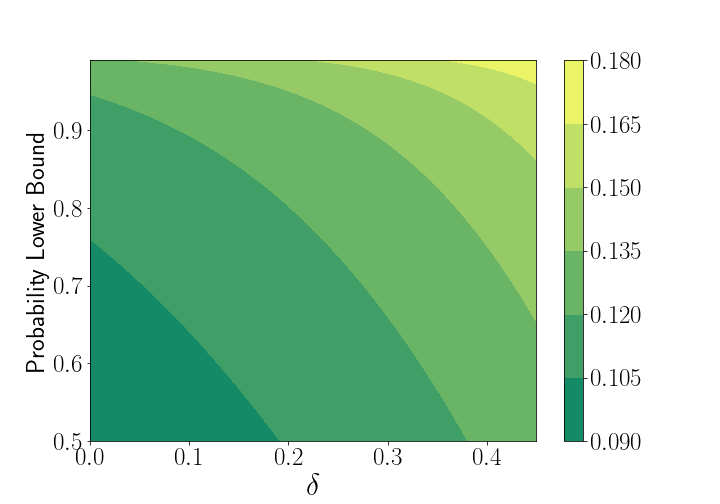}
    \caption{The upper bound derived from inequality \eqref{eq:ineq1_min}, $m=20,n=90$.}
    \end{subfigure} \hspace{2em}
    \begin{subfigure}{6.5cm}
    	    \includegraphics[width=\linewidth]{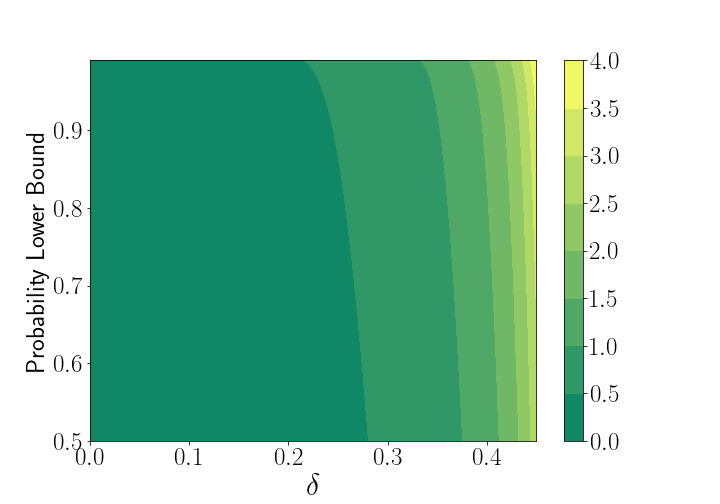}
    \caption{The upper bound derived from inequality \eqref{eq:ineq2_min}, $m=20,n=90$.}
    \end{subfigure}
    \caption{Comparison of the upper bounds given by Theorem~\ref{thm:global} for the distance $\|\hat X\hat X^T - M^*\|_F$ with $\hat X$ being an arbitrary local minimizer under varying values of problem parameters $m$ and $n$.}
\end{figure}
\end{document}